\colorlet{MyRed}{Crimson!75!black}
\colorlet{MyGreen}{DarkGreen!80!black}
\colorlet{MyBlue}{MediumBlue!80!black}
\renewcommand\paragraph{\@startsection{paragraph}{4}{\z@}%
  {-3.25ex\@plus -1ex \@minus -.2ex}%
  {-1em}%
  {\normalfont\normalsize\bfseries}}
\title[EOT via Stable Low Frequency Modes]{Faster Computation of Entropic Optimal Transport via Stable Low Frequency Modes}
\author[R. Chhaibi]{Reda Chhaibi$^{\ast}$}
\address{$^{\ast}$Université Côte d'Azur, CNRS, LJAD, Parc Valrose, 06108 Nice Cedex 02, France}
\email{reda.chhaibi@univ-cotedazur.fr}
\author[S. Gratton]{Serge Gratton$^{\dagger}$}
\address{$^{\dagger}$Toulouse INP-ENSEEIHT, Institut de Recherche en Informatique de Toulouse (IRIT), 2 rue Camichel, BP 7122, 31071 Toulouse Cedex 7, France}
\email{serge.gratton@univ-toulouse.fr}
\author[S. Vaiter]{Samuel Vaiter$^{\ast}$}
\email{samuel.vaiter@cnrs.fr}
\thanks{RC acknowledges support from ANR POAS (ANR-24-CE40-5511) and Institut Universitaire de France (IUF). SV acknowledges support from ANR MAD (ANR-24-CE23-1529), PEPR PDE-AI (ANR-23-PEIA-0004) and the chair 3IA BOGL (ANR-23-IACL-0001).}
\def\1{{\mathbf 1}}
\def\Op{\mathrm{Op}}
\def\Id{\mathrm{Id}}
\def\R{{\mathbb R}}
\def\K{{\mathbb K}}
\def\Cc{{\mathcal C}}
\def\Lc{{\mathcal L}}
\def\Mc{{\mathcal M}}
\def\Oc{{\mathcal O}}
\def\Xc{{\mathcal X}}
\def\Yc{{\mathcal Y}}
\newtheorem{thm}{Theorem}[section]
\newtheorem{lemma}[thm]{Lemma}
\newtheorem{rmk}[thm]{Remark}
\numberwithin{algorithm}{section}
\numberwithin{equation}{section}
\numberwithin{figure}{section}
\numberwithin{table}{section}
\begin{document}

\maketitle

\begin{abstract}
  In this paper, we propose an accelerated version for the Sinkhorn algorithm, which is the reference method for computing the solution to Entropic Optimal Transport. 
  Its main draw-back is the exponential slow-down of convergence as the regularization weakens $\varepsilon \rightarrow 0$.
  Thanks to spectral insights on the behavior of the Hessian, we propose to mitigate the problem via an original spectral warm-start strategy. This leads to faster convergence compared to the reference method, as also demonstrated in our numerical experiments.
\end{abstract}

\section{Introduction}
\begin{wrapfigure}{r}{0.3\textwidth}
  \begin{center}
    \includegraphics[width=0.3\textwidth]{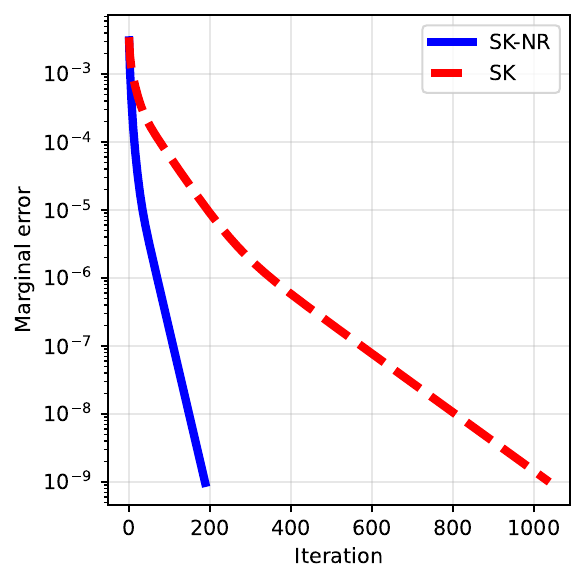}
  \end{center}
  \caption{SK-NR is a very efficient warmstart strategy for decreasing value of $\varepsilon$.}
\end{wrapfigure}

Optimal transport (OT) equips the space $\mathcal{M}_1(\mathcal{X})$ of probability measures with the Wasserstein -- or Earth Mover’s -- distance, a metric that captures its geometric structure.
This feature have inspired substantial mathematical research; see the monographs \citep{villani2008optimal, villani2021topics} for detailed treatments.
On the computational side, the entropic regularization of optimal transport (EOT) advocated by \citet{cuturi2013sinkhorn}, solved via the Sinkhorn--Knopp (SK) iteration, serves as a foundation for large‑scale machine learning applications.
The survey of \citet{peyre2019computational} provides an in-depth overview.

The SK algorithm, aka the iterative proportional fitting algorithm, is a fixed-point algorithm with a contraction coefficient $q^* = q^*(\varepsilon)$. Following \citet[Proposition 19]{vialard2019elementary}, there is a constant $\kappa>0$, depending only on the measures and the cost, such that for all $\varepsilon>0$,
\begin{align}
\label{eq:contraction_coefficient}
   q^*(\varepsilon) \leq & 1 - \exp\left( - \frac{\kappa}{\varepsilon} \right)\ .
\end{align}
Numerical experiments suggest this upper bound is tight -- See \cite[Fig. 4.5]{peyre2019computational}.
Hence, SK and their variants suffer from slow convergence as $\varepsilon$ decreases, a key issue we aim to address in this work.

\paragraph{Related work.} While entropic regularization has made OT scalable for machine-learning tasks with a sufficiently large temperature~$\varepsilon$. Yet, when the downstream task requires couplings, e.g., recovering transport maps for lineage or trajectory inference~\citep{schiebinger2019optimal,lavenant2024toward}, one must drive $\varepsilon$ toward zero, at which point the vanilla Sinkhorn iteration becomes prohibitively slow.
Several works modify the fixed-point iteration itself. \citet{NEURIPS202017f98ddf} show that the Sinkhorn divergence is a nearly unbiased plug-in estimator of the squared Wasserstein distance and, because it tolerates larger regularization with faster effective convergence to the unregularized cost.
Richardson-style extrapolation in~\citep{a14050143} reduces the bias by combining iterates at geometrically scaled temperatures.
From an algorithmic perspective, \citet{tang2024accelerating} use incomplete Sinkhorn sweeps with sparse Newton updates, while \citet{lin2022efficiency} use the dual ascent with a Nesterov acceleration.
Viewing Sinkhorn as coordinate descent, \citet{altschuler2017near} propose Greenkhorn, a greedy variant that updates the most violated marginal at each step, together with its stochastic extension~\citep{abid2018stochastic}.
Nyström kernel compression alleviates the quadratic memory footprint in high dimensions~\citep{altschuler2019massively}.
Finally, the Screenkhorn algorithm identifies inactive dual variables and screens them out before iteration, leading to empirical speed-ups~\citep{alaya2019screening}.

Instead of alternating projections, one may solve the dual of entropic OT with first-order methods.
The stochastic averaged gradient scheme of \citet{genevay2016stochastic} enjoys memory linear in the minibatch size, while the primal-dual mirror-prox strategy of \citet{pmlr-v80-dvurechensky18a} attains a complexity that matches the best Sinkhorn bounds up to logarithmic factors.

A complementary line of work exploits the smoothness of the regularized objective with respect to~$\varepsilon$.
Warm-starting Sinkhorn with potentials computed at a nearby temperature can slash iteration counts, and recent analyses clarify how to choose those initializations~\citep{thornton2023rethinking}.
Stability results in~\citep{schmitzer2019stabilized} justify partially such homotopy strategies.
The annealed Sinkhorn schedule~\citep{chizat2024annealedsinkhornoptimaltransport} with the early proposal of~\citep{kosowsky1991solving}, formalizes this idea: it provably preserves linear convergence while adaptively lowering~$\varepsilon$ along the trajectory.

\paragraph{Contributions.}
Our contributions are as follows.
\begin{enumerate}
\item \textbf{Spectral behavior.}
We prove a \emph{structure theorem} (Theorem~\ref{thm:hessian_structure}) showing that the dual Hessian decomposes as
$\nabla^{2}Q_{\varepsilon}(f^\ast,g^\ast)=-\frac{1}{\varepsilon}(\mathrm{Id}+\mathbb{K}_{\varepsilon})$
where the spectrum of the integral operator $\mathbb{K}_{\varepsilon}$ is confined to $[-1,1]$, potentially clustering at the endpoints.
Consequently, only a handful of ``low-frequency'' eigendirections become poorly conditioned when $\varepsilon \to 0$.

\item \textbf{Mixed first/second-order solver (SK–NR$(\ell)$).}
Exploiting this insight, we design an algorithm that alternates standard Sinkhorn–Knopp updates with
\emph{Newton–Raphson steps restricted to the $\ell$ unstable directions}.
The remaining coordinates are still treated by inexpensive SK rescalings, so each iteration costs
$O(nm + \ell^{3})$, essentially the same as SK when $\ell \ll (nm)^{1/3}$.

\item \textbf{Dimension-free acceleration guarantees.}
We prove global convergence (Theorem~\ref{thm:convergence}) of {SK–NR}$(\ell)$ and show that its linear rate is governed by the $(\ell+1)$-st eigenvalue of $\K_{\varepsilon}$
$$\|f_{k+1}-f^\ast\| \;\le\; \rho_{\ell+1}^{\,2}\,\|f_k-f^\ast\| \;+\; C\,\|f_k-f^\ast\|^{2},$$
so choosing even a small $\ell$ exponentially reduces the iteration count as $\varepsilon$ decreases.

\item \textbf{Practical speed-ups at low regularisation.}
On synthetic 2-D point clouds and a single-cell gene-expression OT task,
{SK–NR}$(\ell)$ delivers order-of-magnitude reductions in iterations compared with SK,
especially in the small-$\varepsilon$ regime relevant for map recovery.
\end{enumerate}

\paragraph{Notations.} For a given Polish space $E$, the convex set of probability measures is written $\Mc_1(E)$. Positive Radon measures are written $\Mc_+(E)$. Given a bounded function $f$ and a measure $\alpha \in \Mc_1(E)$, the duality bracket is $\langle f, \alpha \rangle$.

\section{Preliminaries on Entropic Optimal Transport}
\label{section:theory}

\paragraph{Kantorovich formulation, primal and dual.}
OT is the general problem of moving one distribution of mass to another  as efficiently as possible. For the purposes of this paper, we refrain from the measure-theoretic setting required by general Polish spaces. We consider two discrete spaces, usually samples, 
\begin{align}
\label{def:spaces}
\Xc = \left\{ x_1, x_2, \dots, x_n \right\} , \ & 
\quad \quad
\Yc = \left\{ y_1, y_2, \dots, y_m \right\} . \ 
\end{align}
These two spaces are endowed with reference probability measures $\alpha \in \Mc_1(\Xc)$ and $\beta \in \Mc_1(\Yc)$.
A cost function $\Cc: \Xc \times \Yc \rightarrow \R$ gives the cost of moving $x_i$ to $y_j$. Because of the discrete nature of the spaces $\mathcal{X}$ and $\mathcal{Y}$ \eqref{def:spaces}, it is identified to a matrix $\Cc \in \R_+^{n \times m}$.

Consider the set of all couplings, with marginals $\alpha \in \Mc_1(\mathcal{X})$ and $\beta \in \Mc_1(\mathcal{Y})$, as
$$ \Pi(\alpha,\beta)
   :=
   \left\{
   \pi \in \Mc_+(\mathcal{X} \times \mathcal{Y}) \approx \R_+^{n\times m} \ | \ \pi \mathds{1}_{m}=\alpha, \pi^{T}\mathds{1}_{n}=\beta\right\} \ .
$$
The Kantorovich setup for discrete spaces written is known as the Wasserstein distance $W^{\Cc}$ defined as
\begin{align}
W^{\Cc}(\alpha,\beta) := & \min_{\pi \in \Pi(\alpha,\beta)}\langle \Cc, \pi \rangle
\label{eq:OT_primal}
\\
 = & \sup_{f \oplus g \leq \Cc} \langle f, \alpha \rangle + \langle g, \beta \rangle
 \ .
 \label{eq:OT_dual}
\end{align}
Here the notation $f \oplus g$ refers to the two-variable function $f \oplus g: (x,y) \mapsto f(x) + g(y)$.
Eq. \eqref{eq:OT_primal} and Eq. \eqref{eq:OT_dual} are respectively identified as the primal and dual formulations of the Kantorovich problem.
For an account of the equivalence between primal and dual formulation, using the Fenchel-Rockafellar convex duality, under mild assumptions on the cost function $\Cc$, we refer to \citep{villani2021topics}. The pair of functions $(f,g)$ is commonly referred to as Kantorovich potentials.

\paragraph{Entropic regularization of OT (EOT).}
The OT problems given in Eq. \eqref{eq:OT_primal} and Eq. \eqref{eq:OT_dual} are linear problems under convex contrains. For better properties, the idea is to add a convex penalization term weighted by a penalization parameter $\varepsilon>0$. The choice of the Kullback-Leibler divergence
$KL( \pi \|\alpha \otimes \beta) := \int_{\Xc \times \Yc}\log\left(\frac{d\pi}{d\alpha \otimes \beta}(x,y)\right)\pi(dxdy)$
is a particularly tractable choice with a lot of structure. 
We have primal and dual formulations \citep[Proposition 4.4]{peyre2019computational} given by
\begin{align}
W^{\Cc}_{\varepsilon}(\alpha,\beta) := &  \min_{\pi \in \Pi(\alpha,\beta)} \langle \pi, \Cc \rangle  + \varepsilon KL\left( \pi \| \alpha \otimes \beta \right) \ , 
\label{eq:primal_EOT} \\
\label{eq:dual_EOT}
= & \sup_{f \in \R^n, g \in \R^m} \langle f, \alpha \rangle + \langle g, \beta \rangle  
  - \varepsilon\int_{\Xc \times \Yc}\left(e^{\frac{f(x)+g(y)-\Cc(x,y)}{\varepsilon}}\alpha(dx)\beta(dy)-1\right) \ .
\end{align}

Notice that the dual problem is invariant under the transformation $(f,g) \leftarrow (f,g) + c \left( \mathds{1},- \mathds{1} \right)$. It is generically the only degeneracy of convexity. For fixed $\varepsilon>0$, these are respectively a strongly convex minimization problem and a strongly concave maximization problem. For future reference, we name the objective function 
\begin{align}
	\label{eq:def_Q}
	   Q_{\varepsilon}(f,g)
	:= &  \langle f, \alpha \rangle + \langle g, \beta \rangle - \varepsilon\int_{\Xc \times \Yc}\left(e^{\frac{f(x)+g(y)-\Cc(x,y)}{\varepsilon}}\alpha(dx)\beta(dy)-1\right) \ .
\end{align}

If a maximizer exists, it is unique by strong concavity modulo shifting the pair $(f,g)$ by constants $\left( \mathds{1},- \mathds{1} \right)$. It is denoted $(f^*,g^*)$. The pair $(f^*,g^*)$ is called the optimal Kantorovich potentials. Upon computing the differential of the objective function $Q_\varepsilon$, the first order conditions for optimality read
\begin{align}
	\label{eq:foc_f}
	f = g^{\Cc, \varepsilon} := & -\varepsilon\log \left(\int_{\Xc }\beta(dy)e^{\frac{g(y)-\Cc(\cdot,y)}{\varepsilon}}\right)\ ,
	\end{align}
\begin{align}
	\label{eq:foc_g}
	g = f^{\Cc, \varepsilon} := & -\varepsilon\log \left(\int_{\Yc }\alpha(dx)e^{\frac{f(x)-
	\Cc(x,\cdot)}{\varepsilon}}\right) \ .
\end{align}

In the same fashion as classical OT 
with generalized convexity notions \citep[Definition 2.33]{villani2021topics}, $g^{\Cc, \varepsilon}$ and $g^{\Cc, \varepsilon}$ are called $(\Cc, \varepsilon)$-concave transforms of $f$ and $g$ respectively. 

In any case, the optimal coupling is recovered through the explicit formula \citep[Proposition 4.3]{peyre2019computational}
\begin{align}
\label{eq:Optimal_P_epsilon}
	\pi_\varepsilon^*
:= & \ \exp\left( \frac{f^* \oplus g^*-\Cc}{\varepsilon} \right) \alpha \otimes \beta 
=    \left(\alpha_{i} \beta_{j} \exp\left( \frac{f^*_i+g^*_j-C_{i,j}}{\varepsilon}\right) \right)_{
	  \substack{ 1 \leq i \leq n\\
			\ 1 \leq j \leq m } } \ .
\end{align}

\begin{rmk} 
	Here, both OT and EOT, the primal problem is in the space of dimension $n\times m $ while the dual problem is in the space of dimension $n+m$. Therefore, it is computationally preferable to solve the dual problem instead of the primal.
\end{rmk}

\paragraph{Semi-dual formulation.} Since that, if one of the potentials is fixed, the optimal value for the other is known, the optimization problem of Eq. \eqref{eq:dual_EOT} can be reduced to an optimization over one potential only. This yields the semi-dual formulation, which is formally stated as
\begin{align}
	\label{eq:semi_dual_EOT}
    W^{\Cc}_{\varepsilon}(\alpha,\beta)
	= \sup_{f \in \Lc^1(\alpha)} \langle f, \alpha \rangle +  \langle f^{\Cc, \varepsilon}, \beta \rangle
	= & \sup_{g \in \Lc^1(\beta)} \langle g^{\Cc, \varepsilon}, \alpha \rangle +  \langle g, \beta \rangle \ .
\end{align}
Without loss of generality, we can assume $n<m$ and optimize over $f \in \Lc^1(\alpha)$. For future reference, the objective function is written
\begin{align}
	\label{eq:def_Q_semi}
	   Q_{\varepsilon}^{\mathrm{semi}}(f)
	:= &  \langle f, \alpha \rangle +  \langle f^{\Cc, \varepsilon}, \beta \rangle \ .
\end{align}
For more details, see \citep{cuturi2018semidual}.

\paragraph{The Sinkhorn--Knopp algorithm.}
The idea of the SK algorithm is to successively update the potentials $f$ and $g$, so that each of them satisfies the first order equation i.e.
\begin{align}
	\label{eq:SK_iterations}
	f_{k+1} \longleftarrow & g^{\Cc, \varepsilon}_k \ , 
	&
	g_{k+1} \longleftarrow & f^{\Cc, \varepsilon}_{k+1} \ .
\end{align}
In regards to the block form of the differential, the Sinkhorn algorithm can be seen a block coordinate ascent algorithm \citep[Remark 4.3]{peyre2019computational}. Indeed, by fixing one of the potentials, the other is updated to its optimal value. Then this is repeated alternatively.

In the discrete setting, this only amounts to rescaling columns and rows of the matrix $\left( \exp\left( \frac{f_i+g_j-\Cc_{i,j}}{\varepsilon} \right) \right)_{i,j}$, so that each rescaling enforces the marginal constrain of $\alpha$ or $\beta$, hence the name of iterative proportional fitting. 

\section{Spectral insight into the ill-conditioning of EOT}
\label{section:spectral_insight}

Let us describe a particular integral operator whose spectral analysis plays a key role in the stability of EOT. For simpler notation, define $H := \Lc^2( \Xc; \alpha) \oplus \Lc^2(\Yc; \beta)$ which is the natural space where the potentials $f \oplus g$ live. Unless otherwise stated, the default norm is the one induced by the scalar product. Define the operator $\mathbb{K}_\varepsilon: H \rightarrow H$ as
\begin{align}
    \label{eq:def_operator_K}
    \mathbb{K}_\varepsilon( f_1 \oplus g_1 )
    := & \ 
    \left[ \Op(\pi_\varepsilon)(g_1) \right] \oplus
    \left[ 
    \Op(\pi_\varepsilon)^T(f_1) \right] \ ,
\end{align}
where  
\begin{align}
    \label{eq:def_Op_pi}
    \Op( \pi_\varepsilon^* ) ( g_1 )
    := & \ 
    \int \exp\left( \frac{f^*(\cdot) + g^*(y) - \Cc(\cdot, y)}{\varepsilon} \right) g_1(y) \beta(dy) \ ,
\end{align}

\begin{align}
    \label{eq:def_Op_pi_T}
    \Op( \pi_\varepsilon^* )^T ( f_1 )
    := & \ 
    \int \exp\left( \frac{f^*(x) + g^*(\cdot) - \Cc(x, \cdot)}{\varepsilon} \right) f_1(x) \alpha(dx) \ .
\end{align}
The notation $\Op( \pi_\varepsilon^* )^T$ indicates the adjoint in the sense that $\langle \Op( \pi_\varepsilon^* ) ( g_1 ), f_1 \rangle_{L^2(\Xc; \alpha)} = \langle g_1, \Op( \pi_\varepsilon^* )^T(f_1) \rangle_{L^2(\Yc; \beta)} = \int \pi_\varepsilon^*(dx dy) f_1(x) g_1(y)$.

\paragraph{Key insight.} The operator $\K_\varepsilon$ appears at two levels showing how crucial it is for the conditioning of the EOT dual problem \eqref{eq:dual_EOT}. On the one hand, the Hessian is intimately linked to $\K_\varepsilon$ via the following structural result which seems to have gone unnoticed in the literature.

\begin{thm}[Structure Theorem for the Hessian]
\label{thm:hessian_structure}
The Hessian of $Q_\varepsilon$ at the optimal points has the following expression
$$ \nabla^2 Q_\varepsilon(f^*, g^*) = \frac{-1}{\varepsilon}\left( \Id_H + \K_\varepsilon \right) \ .$$
Moreover
\begin{itemize}
    \item The spectrum of $\K_\varepsilon$ belongs to $[-1, 1]$ and is symmetric around zero. There is a spectral gap and the spectrum is of the form $\{ \pm \rho_\ell \ ; \ell=0, 1, 2, \dots \}$ with $\rho_0 = 1 > \rho_1 \geq \rho_2 \geq \dots$.
    \item Smallest eigenvalues and associated eigenvectors can be recovered from the largest as $\K_\varepsilon( u  \oplus v ) = \rho (u \oplus v)$ if and only if $\K_\varepsilon( (-u) \oplus v ) = -\rho ( (-u) \oplus v)$.
\end{itemize}
\end{thm}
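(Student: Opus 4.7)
The plan is to prove the three parts in sequence: first derive the closed form of the Hessian by direct differentiation, then read off the spectral claims from the block structure of $\K_\varepsilon$ combined with a conditional-expectation contraction argument.

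For the Hessian identity, I would differentiate \eqref{eq:def_Q} twice in $(f,g)$. Writing $k(x,y) := \exp\bigl((f^*(x)+g^*(y)-\Cc(x,y))/\varepsilon\bigr)$ for the density of $\pi_\varepsilon^*$ with respect to $\alpha\otimes\beta$, each second derivative produces a factor $-\tfrac{1}{\varepsilon}$ multiplied by an integral against $k\,\alpha\otimes\beta$. The first-order conditions \eqref{eq:foc_f}--\eqref{eq:foc_g} are equivalent to the marginal identities $\int k(\cdot,y)\beta(dy)=1$ and $\int k(x,\cdot)\alpha(dx)=1$, so the diagonal blocks collapse to $-\tfrac{1}{\varepsilon}\Id$ while the off-diagonal blocks are, by definition, $-\tfrac{1}{\varepsilon}\Op(\pi_\varepsilon^*)$ and $-\tfrac{1}{\varepsilon}\Op(\pi_\varepsilon^*)^T$. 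Reassembling yields $\nabla^2 Q_\varepsilon(f^*,g^*) = -\tfrac{1}{\varepsilon}(\Id_H + \K_\varepsilon)$.

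For the spectral bound, the key observation is that $\Op(\pi_\varepsilon^*)$ is a conditional expectation under $\pi_\varepsilon^*$, namely $\Op(\pi_\varepsilon^*)(g_1)(x) = \mathbb{E}_{\pi_\varepsilon^*}[g_1(Y)\mid X = x]$, and symmetrically for its adjoint. Jensen's inequality then yields $\|\Op(\pi_\varepsilon^*)(g_1)\|_{L^2(\alpha)} \le \|g_1\|_{L^2(\beta)}$, so $\|\K_\varepsilon\| \le 1$ and the spectrum of $\K_\varepsilon$ is contained in $[-1,1]$. Self-adjointness of $\K_\varepsilon$ is immediate from the symmetric bilinear form $(f_1 \oplus g_1,\,\tilde f \oplus \tilde g) \mapsto \int k\,(f_1 \tilde g + \tilde f g_1)\,\alpha \otimes \beta$. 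Since $\K_\varepsilon$ has block anti-diagonal form with blocks $\Op(\pi_\varepsilon^*)$ and $\Op(\pi_\varepsilon^*)^T$, its nonzero spectrum consists of pairs $\pm\sigma$ where $\sigma$ runs over the singular values of $\Op(\pi_\varepsilon^*)$; the \emph{flip} in the second bullet is then the one-line observation that $\K_\varepsilon(u \oplus v) = \rho(u \oplus v)$ is equivalent to $\Op(\pi_\varepsilon^*)v = \rho u$ and $\Op(\pi_\varepsilon^*)^T u = \rho v$, which is in turn equivalent to $\K_\varepsilon\bigl((-u) \oplus v\bigr) = \rho u \oplus (-\rho v) = -\rho\bigl((-u) \oplus v\bigr)$.

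Finally, the constant vector $\mathds{1} \oplus \mathds{1}$ is an eigenvector with eigenvalue $1$ (by the marginal identities $\Op(\pi_\varepsilon^*)\mathds{1}=\mathds{1}$ and $\Op(\pi_\varepsilon^*)^T\mathds{1}=\mathds{1}$), and the flip produces the companion eigenvalue $-1$ on $(-\mathds{1}) \oplus \mathds{1}$, showing that $\rho_0 = 1$ is attained. The delicate point, which I expect to be the main obstacle, is the strict inequality $\rho_1 < 1$. I would deduce it from strict positivity $k > 0$ on $\Xc \times \Yc$: if $\Op(\pi_\varepsilon^*)^T \Op(\pi_\varepsilon^*)\, v = v$, the equality case in the conditional Jensen inequality forces $v(Y)$ to coincide $\pi_\varepsilon^*$-almost surely with a function $u(X)$, and full support of $\pi_\varepsilon^*$ then forces both to be constant. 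The eigenspace attached to $1$ is therefore one-dimensional, and in the discrete setting \eqref{def:spaces} the finite-dimensional spectrum then has a strict gap $\rho_1 < 1$.
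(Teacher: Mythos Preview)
Your argument is correct. The Hessian computation and the block anti-diagonal ``flip'' between eigenvalues $\rho$ and $-\rho$ match the paper's proof essentially line for line.

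The genuine difference lies in how you bound the spectrum and obtain the gap. The paper writes $\K_\varepsilon$ as a non-negative matrix in the Dirac basis and invokes the Perron--Frobenius theorem: irreducibility and $2$-periodicity give the Perron eigenvector $\mathds{1}\oplus\mathds{1}$ with simple eigenvalue $1$, and the Perron--Frobenius spectral gap yields $\rho_1<1$ directly. You instead interpret $\Op(\pi_\varepsilon^*)$ as the conditional expectation $g_1\mapsto\E_{\pi_\varepsilon^*}[g_1(Y)\mid X]$ and use Jensen's inequality to get the $L^2$ contraction $\|\K_\varepsilon\|\le1$, then argue the equality case: full support of $\pi_\varepsilon^*$ forces any $v$ with $\|\Op(\pi_\varepsilon^*)v\|=\|v\|$ to be constant, so the top singular value is simple and finite dimensionality gives the strict gap. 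Both routes are standard and essentially dual to one another---your strict-positivity argument is precisely the irreducibility hypothesis of Perron--Frobenius unpacked by hand. Your approach is slightly more self-contained and extends verbatim to the non-discrete setting; the paper's approach has the advantage of making the Markov-chain structure explicit, which it later exploits when discussing the small-$\varepsilon$ spectral heuristics via \cite{holley1988simulated,miclo1995comportement}.
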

\begin{proof}
See Supplementary material.
\end{proof}
As a consequence, eigenvalues of $\Id_H+\K_\varepsilon$ near zero have an important influence on all first order (gradient) methods and the stability of a second order algorithm such as damped Newton. 

On the other hand, it also appears as the linearization of the fixed-point SK algorithm as shown in the upcoming Lemma \ref{lemma:equivalent}. As such, the extremal eigenvalues are again important regarding the asymptotic rate of convergence.

\paragraph{Asymptotic rate of convergence.}
In fact, it is easy to describe the asymptotic rate of convergence which is observed in \cite[Fig. 4.5]{peyre2019computational}. The estimate of Eq. \eqref{eq:contraction_coefficient} is therefore sharp, up to the choice of the constant $\kappa$. 
\begin{lemma}
\label{lemma:equivalent}
Let $(f_k, g_k)$ be a sequence converging to $(f^*, g^*)$. We have the following asymptotic equivalent 
\begin{align*}
    & 
    \left(  g_k^{\Cc, \varepsilon} - f^*,
            f_k^{\Cc, \varepsilon} - g^*
    \right) \\
    = & 
    -\K_\varepsilon
    \left(  f_{k} - f^*,
            g_{k} - g^*
    \right) 
    +
    \Oc\left( \frac{1}{\varepsilon}
    \| \left(  f_{k} - f^*,
            g_{k} - g^*
    \right) 
    \|_\infty^2
    \right)
    \ .
\end{align*}
In particular, if $f_k$ is the sequence of SK iterates, then there exist a constant $C>0$ such that
$$ \| f_{k+1} - f^\ast \| \; \le \; \rho_{1}^{\,2} \, \|f_k - f^\ast\| 
   + C \| f_k - f^\ast \|^{2} \ ,$$
\end{lemma}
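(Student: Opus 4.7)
The plan is to Taylor-expand the $(\Cc,\varepsilon)$-transform around the optimal potentials, identify the leading linear part with the block operator $\K_\varepsilon$, and then compose two successive Sinkhorn half-steps to bring out the squared rate $\rho_1^2$.

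\emph{Linearization of the transform.} Set $\delta f := f_k - f^\ast$ and $\delta g := g_k - g^\ast$. Using the first-order condition \eqref{eq:foc_f}, $f^\ast = (g^\ast)^{\Cc,\varepsilon}$, the logarithmic ratio telescopes and
\begin{align*}
g_k^{\Cc,\varepsilon}(x) - f^\ast(x)
= -\varepsilon \log \int \exp\!\left( \frac{\delta g(y)}{\varepsilon} \right) \mu_x^\ast(dy),
\end{align*}
where $\mu_x^\ast(dy) := \exp\bigl( (f^\ast(x) + g^\ast(y) - \Cc(x,y))/\varepsilon \bigr)\,\beta(dy)$ is a \emph{probability} measure on $\Yc$ (precisely by \eqref{eq:foc_f}). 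Expanding $\exp$ to second order and $\log(1+u)$ to first order, one obtains
\begin{align*}
g_k^{\Cc,\varepsilon} - f^\ast
= -\Op(\pi_\varepsilon^\ast)(\delta g) + \Oc\bigl(\varepsilon^{-1}\|\delta g\|_\infty^2\bigr),
\end{align*}
by direct comparison with \eqref{eq:def_Op_pi}. The symmetric computation yields $f_k^{\Cc,\varepsilon} - g^\ast = -\Op(\pi_\varepsilon^\ast)^T(\delta f) + \Oc(\varepsilon^{-1}\|\delta f\|_\infty^2)$. Assembled into a pair, this is the first claim via the block definition \eqref{eq:def_operator_K}.

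\emph{Composing the two Sinkhorn half-steps.} From \eqref{eq:SK_iterations}, one full cycle produces $f_{k+1} = g_k^{\Cc,\varepsilon}$, while the previous half-step enforces $g_k = f_k^{\Cc,\varepsilon}$. Applying the linearization twice, and using that $\Op(\pi_\varepsilon^\ast)$ and $\Op(\pi_\varepsilon^\ast)^T$ are Markov operators (their kernels integrate to $1$ against $\beta$ and $\alpha$ respectively, as a consequence of \eqref{eq:foc_f}--\eqref{eq:foc_g}) so that they contract $\|\cdot\|_\infty$, one gets
\begin{align*}
f_{k+1} - f^\ast
= \Op(\pi_\varepsilon^\ast)\,\Op(\pi_\varepsilon^\ast)^T (f_k - f^\ast) + \Oc\bigl(\varepsilon^{-1}\|f_k - f^\ast\|_\infty^2\bigr).
\end{align*}
The operator $A := \Op(\pi_\varepsilon^\ast)\,\Op(\pi_\varepsilon^\ast)^T$ is self-adjoint positive, and the singular-value identification behind Theorem~\ref{thm:hessian_structure} yields $\mathrm{spec}(A) = \{\rho_\ell^2\}$. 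The top eigenvalue $\rho_0^2 = 1$ lives in the constant direction corresponding to the gauge freedom $(f,g) \sim (f+c, g-c)$, which SK preserves exactly; normalizing $f_k$ and $f^\ast$ consistently (say, zero mean under $\alpha$) kills this component of $f_k - f^\ast$, so that $\|A\|_{\mathrm{op}}$ on the admissible subspace equals $\rho_1^2$. The stated bound follows, with $C$ absorbing $\varepsilon^{-1}$ and the finite-dimensional equivalence between $\|\cdot\|_\infty$ and the working norm $\|\cdot\|$.

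\emph{Main obstacle.} The only genuine subtlety is the bookkeeping of the trivial $\rho_0 = 1$ eigenvalue: one must verify that the SK orbit stays in (or the norm filters out) the constant direction, so that the effective contraction is $\rho_1^2$ rather than $\rho_0^2 = 1$. The two Taylor expansions themselves are routine once the transform is rewritten against the probability measure $\mu_x^\ast$ that arises naturally from the first-order conditions.
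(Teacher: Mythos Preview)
Your argument is correct and mirrors the paper's approach: rewrite $g_k^{\Cc,\varepsilon}-f^\ast$ via the optimality condition so that the integrand is taken against a probability measure, then Taylor-expand $\exp$ and $\log$ to isolate $-\Op(\pi_\varepsilon^\ast)(\delta g)$ with a quadratic remainder, and treat the other component symmetrically. The paper's written proof actually stops after this linearization and leaves the ``In particular'' clause implicit; your composition of the two half-steps to obtain $RR^T$ and your explicit handling of the trivial $\rho_0=1$ direction (via the gauge normalization) are exactly the missing details, consistent with how the paper later uses the same identity $F_k-f^\ast = RR^T(f_k-f^\ast)+\Oc(\|f_k-f^\ast\|^2)$ in the proof of Theorem~\ref{thm:convergence}.
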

so that the spectral gap of $\K_\varepsilon$ drives the convergence.
\begin{proof}
See Supplementary material.
\end{proof}

\paragraph{Spectral stability.} A motivated conjecture is as follows. As $\varepsilon \rightarrow 0$, the spectral distribution of $\K_\varepsilon$ seems to concentrate on the boundary $\partial [-1, 1] = \{-1, 1\}$ causing more and more instability. This is illustrated in Figure~\ref{fig:eigenvalues} where the dataset used is uniform measure on the square and annulus.
However, as shown in Fig. \ref{fig:eigenvector_heatmaps}, associated eigenvectors seem rather indifferent to $\varepsilon$'s variation. 

\begin{wrapfigure}{r}{0.3\textwidth}
  \vspace{-0.7cm}
  \begin{center}
    \includegraphics[width=0.3\textwidth]{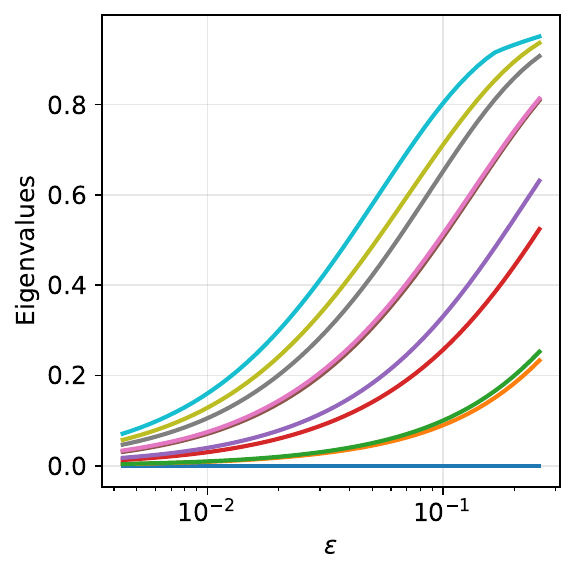}
  \end{center}
  \caption{Behavior of the smallest ten eigenvalues of $\Id_H + \mathbb{K}_\varepsilon$ in function of $\varepsilon$.}
  \label{fig:eigenvalues}
\end{wrapfigure}
We surmise that a proof of this conjecture would involve subtle spectral analysis, and we consider that beyond the scope of the current paper. Nevertheless, a theoretical justification would be as follows. The Structure Theorem \ref{thm:hessian_structure} shows that the Hessian as proportional to $\Id_H + \K_\varepsilon$, and $\K_\varepsilon$ itself can be reinterpreted as the transition matrix of Markov chain. This Markov chain is 2-periodic on a state space with types of points: the points in $\Xc$ and points in $\Yc$. The transitions are given by Gibbs measures with temperature $\varepsilon$. In that context, following \cite{holley1988simulated, miclo1995comportement}, the spectrum is expected to be unstable as the spectral gap disappears, while the eigenvectors are expected to be stable. This also explains why Fig. \ref{fig:eigenvector_heatmaps} is reminiscent of Courant's nodal domains and spectral clustering \cite{chung1997spectral}, by interpreting $\K_\varepsilon$ as a Laplacian or as a graph Laplacian.

\begin{figure*}[t]
  \centering
  \includegraphics[width=1.0\textwidth]{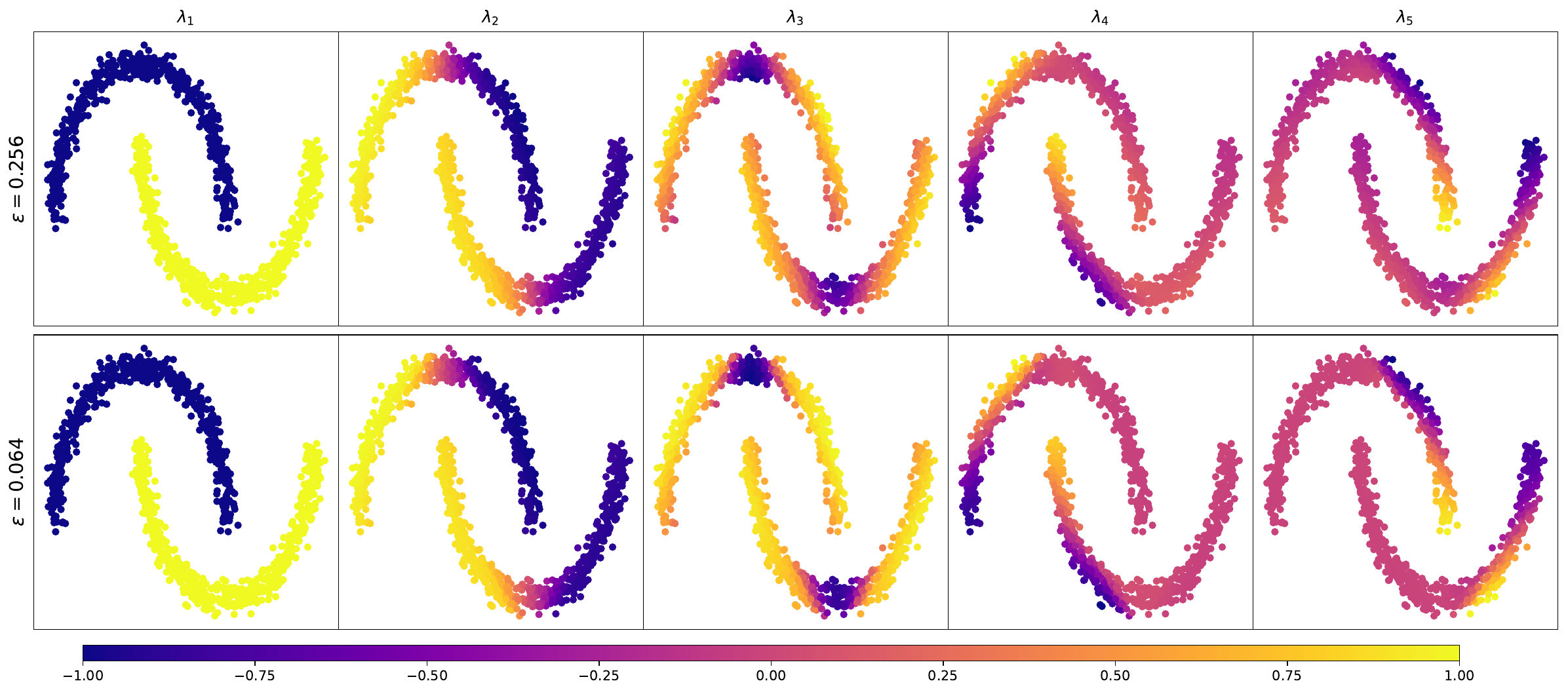}\\
  \includegraphics[width=1.0\textwidth]{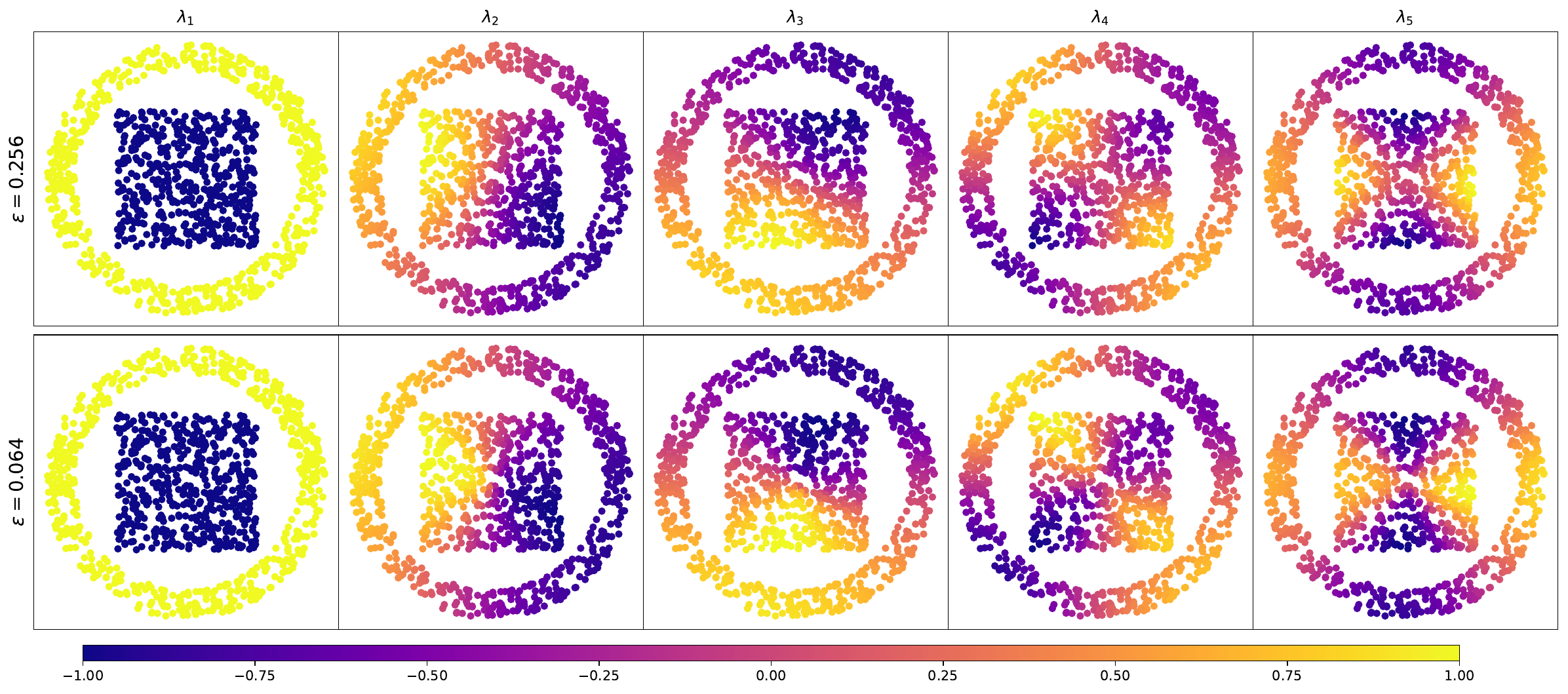}
  \caption{%
    Stability of eigenvectors. Eigenvectors are viewed as functions on the spaces $\Xc$ and $\Yc$, and viewed as heatmaps. One sees how little the 5 first eigenvalues change from $\varepsilon=0.
    256$ to $\varepsilon=0.064$.%
    \textbf{ Top:} Measures are uniform on each component of the classical two-moons dataset.%
    \textbf{ Bottom:} Measures are uniform on the square and an annulus.%
    }
  \label{fig:eigenvector_heatmaps}
\end{figure*}

\paragraph{Relationship to classical acceleration and extrapolation.} We mentioned the paper \citep{a14050143} in introduction. Recall that standard acceleration methods such as Richardson's or Anderson's work as follows. They proceed with averaging the trajectory of the algorithm, in order to produce a better asymptotic behavior. They are generic and require little input beyond the existence of a linearization as in Lemma \ref{lemma:equivalent}. The exact form of the linearization operators is not even required.

In a sense, our approach is a Richardson-type extrapolation. Only that we actually have identified the limiting linearization, and we have an insight on its spectral behavior. It is this information that is injected and leveraged our upcoming SK--NR Algorithm \ref{algorithm:ourMethod}.

\section{Algorithm}
\label{section:algorithm}

Our method is described as Algorithm \ref{algorithm:ourMethod}.
This is a variation around Sinkhorn--Knopp algorithm in the ``log-domain''.
Given $\varepsilon>0$, it is run with $V_\ell = V_\ell(\varepsilon')$ being described by a matrix in $\R^{n \times \ell}$ with orthonormal column vectors. These vectors obtained from the spectral decomposition of the Hessian of $Q_{\varepsilon'}$ of $\varepsilon'>\varepsilon$. The initial Hessian of $Q_{\varepsilon'}$ is obtained via a vanilla SK, which is very fast for larger $\varepsilon'$. In that sense, our method is a \emph{spectral warm-start} strategy.

A crucial remark is that a full spectral decomposition is not needed. From Theorem \ref{thm:hessian_structure}, the smallest eigenvalues and their associated eigenvectors can be deduced by symmetry from the largest ones. And $\ell$ largest eigenvalues can be conveniently computed thanks to a partial eigen decomposition, which is based on an efficient power method. Also, the knowledge of the eigen decompositions of $\nabla^2 Q_{\varepsilon'}$ and $\nabla^2 Q_{\varepsilon'}^{\mathrm{semi}}$ are essentially equivalent.

\begin{algorithm}
\caption{SK-NR($\ell)$ Algorithm}
\begin{algorithmic}[1]
\Require Cost matrix $\Cc \in \mathbb{R}^{n \times m}$, regularization parameter $\varepsilon > 0$, target marginals $\alpha, \beta \in \mathbb{R}^n_+$, tolerance $\omega > 0$, max iterations $\mathrm{max\_iter}$, Vector space $V_\ell \subset \R^n$ of dimension $\ell$ of low-frequency modes.
\Ensure Approximate transport plan $\pi$ following the EOT setup
\State Initialize potentials: $f_0 \gets \mathbf{0}_n$, $g_0 \gets \mathbf{0}_n$
\For{$k = 1$ to $\mathrm{max\_iter}$}
    \State $g_k \gets f_{k-1}^{\Cc, \varepsilon}$ \Comment{SK iteration Eq. \eqref{eq:SK_iterations}}
    \State $f_k \gets g_{k}^{\Cc, \varepsilon}$
    \State $\pi \gets \left( \exp\left( \frac{f_i+g_j-\Cc_{i,j}}{\varepsilon} \right) \alpha_i \beta_j \right)_{i,j}$ \Comment{Compute approx. coupling Eq. \eqref{eq:Optimal_P_epsilon}}
    \If{converged (e.g., $\| \pi \mathds{1}_m - \alpha\| + \|\pi^\top \mathds{1}_n - \beta\| < \omega$)}
        \State \textbf{break}
    \EndIf
    \If{ $\dim V_\ell > 0$ } \Comment{Attempts Newton step if provided with low frequency directions}
        \State Compute value function $Q_\varepsilon^{\mathrm{semi}}(f_k)$
        \State Compute 1\textsuperscript{st} and 2\textsuperscript{nd} derivatives on $V_\ell$: 
        $ \nabla_{V_\ell} Q_\varepsilon^{\mathrm{semi}}(f_k), 
          \nabla_{V_\ell}^2 Q_\varepsilon^{\mathrm{semi}}(f_k)
        $
        \State Compute NR update: $\Delta f_k \gets -\left( \nabla_{V_\ell}^2 Q_\varepsilon^{\mathrm{semi}}(f_k) \right)^{-1} \nabla_{V_\ell} Q_\varepsilon^{\mathrm{semi}}(f_k)$
        \State $f_k' \gets f_k + \Delta f_k$
        \If{$Q_\varepsilon^{\mathrm{semi}}(f_k') > Q_\varepsilon^{\mathrm{semi}}(f_k)$}
            \State $f_k \gets f_k'$ \Comment{Accept Newton step}
        \EndIf
    \EndIf
\EndFor
\end{algorithmic}
\label{algorithm:ourMethod}
\end{algorithm}
\addtocounter{thm}{1}

\paragraph{Per iteration complexity.}
The complexity of a Sinkhorn--Knopp iterations is $\Oc\left( n m \right)$ as it dominated by the cost of the matrix-vector multiplications $K e^{\frac{f}{\varepsilon}}$, $K^T e^{\frac{g}{\varepsilon}}$. Notice that log-domain regularization incurs the additional cost of exponentiating multiple times by forming matrices such as $\left( e^{\frac{f_i+g_j-C_{i,j}}{\varepsilon}} \right) \in \R_+^{n \times m}$, as well as resizing by row minima or column minima. This remains of order $\Oc( mn )$ though.

The cost of a Newton step includes
\begin{itemize}
    \item forming gradient and Hessian restricted to the smaller dimensional space $V_\ell$. This has cost $\Oc\left( \ell (m+n) + mn \right)$.
    \item inverting linear system. This has cost $\Oc\left( \ell^3 \right)$. Notice that one can invoke the Cholesky decomposition for slightly more efficiency, as the Hessian is symmetric and negative-definite.
    \item evaluating the objective function. This has cost $\Oc( n m )$.
\end{itemize}

All in all, the complexity of one iteration for our method is $\Oc\left( \ell^3 + nm \right)$. For $\ell=0$, we morally recover the usual Sinkhorn--Knopp iteration. Both complexities remain very close even for large $\ell \ll \left( nm \right)^{\frac13}$. In practice, when comparing SKNR to the usual SK in our experiments, the overhead per iteration proved to be minimal in terms of compute time.

\paragraph{Convergence.} Recall that convergence of the SK algorithm is guaranteed via a contraction argument using the Hilbert projective distance -- see \cite[Theorem 4.2]{peyre2019computational}. Conversely, the NR step is designed to increase the value function $Q_\varepsilon^{\mathrm{semi}}$, offering rapid local improvement when applicable. However, when combined in an alternating scheme, the contraction property of SK and the ascent guarantee of NR do not straightforwardly imply global convergence, as each step may inadvertently disrupt the structure or progress established by the other. Therefore, despite the intuitive appeal of this hybrid approach, convergence is not immediate and must be rigorously established through a dedicated proof.

\begin{thm}
\label{thm:convergence}
Algorithm \ref{algorithm:ourMethod} converges to the EOT optimum $(f^*, g^*, \pi^*)$. Moreover, if $V_\ell = V_\ell(\varepsilon)$ is the eigenspace corresponding the first $\ell$-eigenvectors of $\nabla^2 Q_\varepsilon^{\mathrm{semi}}$, we have the error estimate
\begin{align}
\label{eq:SKNR_error_estimate}
       \left\| f_{k+1} - f^* \right\| 
\leq & \ \rho_{\ell+1}^2 \left\| f_{k}-f^* \right\| + C \left\| f_{k} - f^* \right\|^2 \ ,
\end{align}
where $\rho_\ell$ is the $\ell$-th eigenvalue value of $\K_\varepsilon$, and $C$ is an implicit constant .
\end{thm}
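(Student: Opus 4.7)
The plan is to establish global convergence by a monotone-ascent argument, and then derive the quantitative rate \eqref{eq:SKNR_error_estimate} by linearising both substeps of one cycle around $(f^*,g^*)$ and decomposing the error along $V_\ell \oplus V_\ell^\perp$.

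For global convergence, observe that in each iteration the two SK updates of lines 3--4 perform block-coordinate ascent on $Q_\varepsilon$, and in particular on $Q_\varepsilon^{\mathrm{semi}}$, while the NR update is applied only when it strictly increases $Q_\varepsilon^{\mathrm{semi}}$ (line 14). Hence $\bigl(Q_\varepsilon^{\mathrm{semi}}(f_k)\bigr)_k$ is non-decreasing and bounded above by $W_\varepsilon^{\Cc}(\alpha,\beta)$, and therefore converges. Strong concavity of $Q_\varepsilon^{\mathrm{semi}}$ modulo the shift invariance $f \mapsto f+c\mathds{1}$ makes centred sublevel sets compact, and any accumulation point $\bar f$ must satisfy $Q_\varepsilon^{\mathrm{semi}}(T(\bar f)) = Q_\varepsilon^{\mathrm{semi}}(\bar f)$, where $T$ denotes the SK map; hence $\bar f$ is a fixed point of $T$ and, by uniqueness of the EOT optimiser, equals $f^*$.

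For the local rate, Lemma \ref{lemma:equivalent} provides a first-order description of both $(\Cc,\varepsilon)$-transforms near the optimum. Using the block form $\K_\varepsilon(u \oplus v) = \Op(\pi_\varepsilon^*)(v) \oplus \Op(\pi_\varepsilon^*)^T(u)$, a full SK cycle $f \mapsto g = f^{\Cc,\varepsilon} \mapsto g^{\Cc,\varepsilon}$ linearises to the self-adjoint positive operator $\Op(\pi_\varepsilon^*)\Op(\pi_\varepsilon^*)^T$ on $L^2(\Xc;\alpha)$, whose spectrum is $\{\rho_\ell^2\}_{\ell \geq 0}$ by Theorem \ref{thm:hessian_structure}. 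Taking the Schur complement of the joint Hessian in the $g$-block yields $\nabla^2 Q_\varepsilon^{\mathrm{semi}}(f^*) = -\tfrac{1}{\varepsilon}\bigl(\Id - \Op(\pi_\varepsilon^*)\Op(\pi_\varepsilon^*)^T\bigr)$, so $V_\ell$ is precisely the eigenspace of the top $\ell$ eigenvalues of $\Op\Op^T$. Writing $e_k := f_k - f^*$ and decomposing $e_k = e_k^\parallel + e_k^\perp$ with $e_k^\parallel \in V_\ell$, a standard quadratic-model analysis of Newton's method shows that the restricted NR step, when accepted, drives the $V_\ell$-component of the error to size $\mathcal{O}(\|e_k\|^2)$ while leaving the $V_\ell^\perp$-component unchanged up to a quadratic residual. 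The subsequent SK cycle then contracts the $V_\ell^\perp$-component by at most $\rho_{\ell+1}^2$, the operator norm of $\Op\Op^T$ on its invariant subspace $V_\ell^\perp$. Combining these two contributions gives $\|e_{k+1}\| \leq \rho_{\ell+1}^2 \|e_k\| + C\|e_k\|^2$, which is exactly \eqref{eq:SKNR_error_estimate}.

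The chief technical obstacle is to reconcile the NR acceptance test with this continuous analysis: far from $f^*$, a restricted Newton update on a non-quadratic concave function need not be an ascent direction and the test may reject it, leaving only the SK substep active. Once the iterate enters a neighbourhood of $f^*$ on which the quadratic model is a good approximation, strict concavity forces NR to be an ascent direction and the test is automatically satisfied, so the sharp rate takes over; in the far regime one only relies on the global monotone-ascent argument, and the two regimes must be patched by continuity. A secondary concern is the control of the $\mathcal{O}(\|\cdot\|^2)$ remainders in Lemma \ref{lemma:equivalent}, whose prefactor scales as $1/\varepsilon$; since the local analysis is asymptotic in $\|e_k\|$ at fixed $\varepsilon$, this prefactor is absorbed into the implicit constant $C$.
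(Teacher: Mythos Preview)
Your proposal is correct, and for the error estimate it follows essentially the same linearisation route as the paper: both identify the SK cycle with $\Op(\pi_\varepsilon^*)\Op(\pi_\varepsilon^*)^T = RR^T$, observe that $\nabla^2 Q_\varepsilon^{\mathrm{semi}}(f^*) = -\tfrac{1}{\varepsilon}(\Id - RR^T)$ so that $V_\ell$ is an invariant subspace of $RR^T$, and conclude that the restricted NR step annihilates the $V_\ell$-component while SK contracts the $V_\ell^\perp$-component by $\rho_{\ell+1}^2$. Two small remarks: (i) in Algorithm~\ref{algorithm:ourMethod} the order within one cycle is SK \emph{then} NR, not NR then SK as you describe; since $\Pi_{V_\ell^\perp}$ and $RR^T$ commute this is harmless for the bound, but it is worth aligning with the stated algorithm. (ii) Your derivation of the semi-dual Hessian via the Schur complement of the $g$-block is cleaner than the paper's direct computation from the perturbation of $f \mapsto f^{\Cc,\varepsilon}$.

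For global convergence the approaches genuinely differ. You use a standard accumulation-point argument: monotonicity of $Q_\varepsilon^{\mathrm{semi}}(f_k)$ plus compactness of super-level sets forces any cluster point $\bar f$ to satisfy $Q_\varepsilon^{\mathrm{semi}}(T(\bar f)) = Q_\varepsilon^{\mathrm{semi}}(\bar f)$, whence $\bar f = f^*$ by strict concavity of each coordinate maximisation. The paper instead argues by contradiction: assuming $Q_\varepsilon^{\mathrm{semi}}(f_k) \nearrow Q_\varepsilon^\infty < Q_\varepsilon^{\mathrm{semi}}(f^*)$, it exploits the \emph{global} Hilbert-metric contraction of SK to show $\|f_k - f_{k-1}\|_{\mathrm{osc}}$ stays bounded below, and then combines strong concavity with the block-ascent interpretation to exhibit a sufficient increase, contradicting convergence of the values. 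Your route is shorter and avoids invoking the Hilbert contraction rate; the paper's route is more quantitative in spirit and makes explicit that the SK substep alone guarantees sufficient increase regardless of whether NR is accepted. One caveat on your side: compactness of super-level sets does not follow from strong concavity per se (which the paper only asserts locally, on compact sets) but from coercivity of $Q_\varepsilon^{\mathrm{semi}}$ on $L^2(\Xc;\alpha)/\R\mathds{1}$; this is true here and easy to check, but the justification you give is not quite the right one.
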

\begin{proof}
See Supplementary material. We give a sketch of proof here.
We first show that the semi-dual $Q_\varepsilon^{\mathrm{semi}}$ increases, that is $Q_\varepsilon^{\mathrm{semi}}(f_k) \geq Q_\varepsilon^{\mathrm{semi}}(f_{k-1})$.
Then, we obtain the convergence in spite of the NR step. We prove that each step of a SK iteration guarantees sufficient increase. This uses both strong concavity for the oscillation norm and the global contraction for the Hilbert distance.
\end{proof}
By inspecting the proof, ones realizes that convergence is still ensured upon replacing the NR step by \emph{any} optimization step which increases the value function $Q_\varepsilon^{\mathrm{semi}}$. This is only possible thanks to the concavity properties of the semi-dual formulation of EOT.
It turns out that the NR step is a simple, yet very efficient way to increase this value as shown empirically in Section~\ref{section:experiments}. The spectral insight allows to maximize the effect of the NR step, while remaining low-dimensional.

\paragraph{Global complexity.}
Let $\omega\in(0,1)$ be the stopping tolerance on the potentials.
Because the SK iteration contracts linearly with factor $\rho_1(\mathbb{K}_\varepsilon)^2$, while {SK–NR}$(\ell)$ contracts with $\rho_\ell(\mathbb{K}_\varepsilon)^2$ thanks to Theorem~\ref{thm:convergence}, the iteration counts satisfy
$$
  N_{\mathrm{SK}}(\omega) \simeq \Bigl\lceil\frac{\log \omega}{2\log \rho_1(\mathbb{K}_\varepsilon)}\Bigr\rceil,
  \qquad
  N_{\mathrm{SK-NR}(\ell)}(\omega) \simeq \Bigl\lceil\frac{\log \omega}{2\log \rho_\ell(\mathbb{K}_\varepsilon)}\Bigr\rceil.
$$
\noindent\emph{Asymptotics for small $\varepsilon$.}
The heuristic from small-temperature Schr\"odinger operators \cite{miclo1995comportement} and from \citep[Remark 4.16]{peyre2019computational} is that, as $\varepsilon\!\to\!0$,
$$
  \rho_\ell(\mathbb{K}_\varepsilon) \simeq 1-\exp \Bigl(-\frac{\lambda_{\ell+1}}{\varepsilon}\Bigr),
$$
for certain decreasing $\lambda_j$'s.
Injecting this estimate yields the heuristic, yet informative bounds
$$
  N_{\mathrm{SK}}(\omega) \asymp 
  - \exp \Bigl(\frac{\lambda_{1}}{\varepsilon}\Bigr) \log\omega,
  \qquad
  N_{\mathrm{SK-NR}}(\omega) \asymp
  -\exp \Bigl(\frac{\lambda_{\ell+1}}{\varepsilon}\Bigr) \log\omega.
$$
Hence each additional Newton direction replaces the dominant exponent $\lambda_1$ by $\lambda_{\ell+1}$, producing an \emph{exponential} drop in the total iteration count when $\varepsilon$ is small, consistent with the speed-ups observed in Section~\ref{section:experiments}. Empirically, Fig. \ref{fig:eigenvalues} shows how changing $\ell$ changes the contraction speed, without relying on the previous heuristic.

\paragraph{Remarks on design choices of the algorithm.} Lines 14-16 are there to make sure that the Newton step does not decrease the objective function, which can indeed happen since Newton's convergence is only local. In fact, as $\varepsilon \rightarrow 0$, removing those lines leads to quite a bit of instability.

In order to fully leverage a Newton step, one would argue for adding a line search component. However, we found that this only complicates the algorithm while an SK step is not less efficient that a damped Newton step in that range -- away from the basin of attraction. A line search procedure also renders the algorithm less parallelizable.

\paragraph{Disadvantages of the algorithm compared to vanilla SK.} Two shortcomings come to mind. On the one hand, vanilla SK can be automatically differentiated as is, whereas SK-NR($\ell$) loses that property because of the NR step. This can be easily fixed though thanks to implicit differentiation~\citep{luise2018differential}, rather than back-propagating through the loop~\citep{genevay2018learning,pauwels2023derivatives}.

On the other hand, by design, vanilla SK is fully parallel and GPU friendly allowing it to solve several EOT problems simultaneously -- see \citep[Remark 4.16]{peyre2019computational}. Indeed, proportionnal fitting in parallel only amounts to multiplying higher order tensors which is not the case for the NR step. The NR step can be made truly parallel and GPU friendly by pursuing iterative inversion of $\nabla^2_{V_\ell} Q_\varepsilon^{\mathrm{semi}}$ or using GPU friendly implementations of conjugate gradient.

\section{Numerical experiments}
\label{section:experiments}

We evaluate the performance of SK-NR on two different tasks.
First, we show the behaviour on synthetic 2D distributions to highlights the impact of the different parameters of the method.
Second, we show that the method interesting improvements for a single-cell gene expression task when decreasing the value of $\varepsilon$ and using a small number of eigenvalues of the matrix $\mathbb{K}_\varepsilon$ (here $\ell=30$).

\paragraph{Estimating couplings for 2D distributions.}
\begin{figure*}[t]
  \centering
  \includegraphics[width=0.32\textwidth]{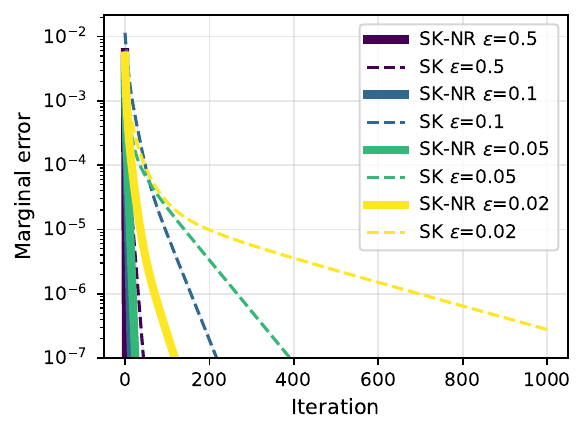}
  \hfill
  \includegraphics[width=0.32\textwidth]{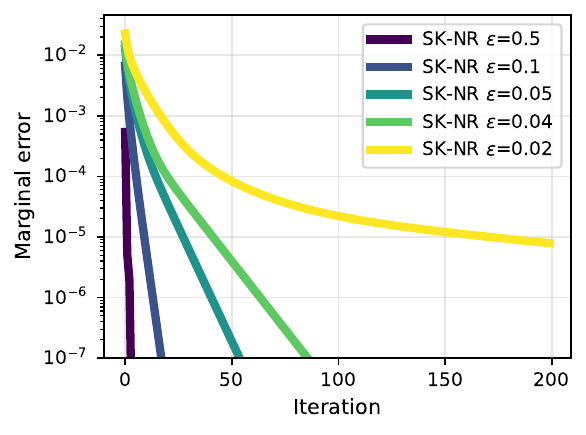}
  \hfill
  \includegraphics[width=0.32\textwidth]{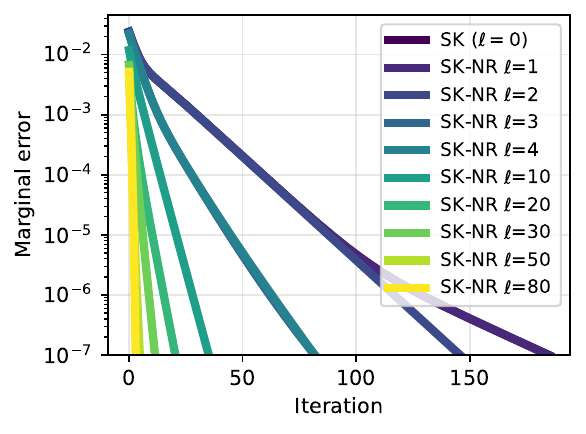}
  \caption{%
    All plots show the influence of the number of iterations with respect to the marginal error $\|\pi^\top f - \beta\|$.%
    \textbf{ Left:} Influence of sequential warm-starting by initializing with $\varepsilon=1$ and progressively decreasing it. We compare the standard Sinkhorn warm-start (dashed) with SK-NR using spectral information.%
    \textbf{ Middle:} Influence of a static warm-start with $\varepsilon=1$ when transitioning to a smaller value of $\varepsilon$.%
    \textbf{ Right:} Influence of the number of eigenvalues used, $\ell=0$ corresponds to the standard Sinkhorn algorithm.
    }
  \label{fig:2d-gaussians}
\end{figure*}
We consider a synthetic optimal transport problem between two Gaussian point clouds in $\mathbb{R}^2$.
The source distribution consists of $n = 200$ samples drawn from a centered isotropic Gaussian with mean $\mu_s = (0, 0)$ and covariance $\Sigma_s = I$.
The target distribution is defined by $2n = 400$ samples drawn from a correlated Gaussian with mean $\mu_t = (4, 4)$ and covariance $\Sigma_t = \begin{bmatrix}1 & -0.8 \\ -0.8 & 1\end{bmatrix}$.
Both source and target distributions are uniform over their respective samples.
The cost matrix $M$ is defined as the squared Euclidean distance between all source and target samples. Figure~\ref{fig:2d-gaussians} presents a comparison of Sinkhorn-based methods in this setting, showing the marginal error $\|\pi^\top f -\beta\|$ as a function of the number of iterations under various warm-starting and spectral regularization strategies.
In particular, we show the tremendous improvement when used as sequential warmstart towards a target $\varepsilon$ value with respect to a standard Sinkhorn warmstart of the dual potentials.
Note however that it is not a method to achieve lower values of $\epsilon$ due to potential instabilities in the computation of the eigenvectors in this case.

\paragraph{Single-cell gene expression.}
We consider a real-world application of map estimation that arises in studies like~\citep{schiebinger2019optimal} which aim to infer cellular evolution from temporal gene expression. The dataset consists of cell measurements over 18 days, with each sample containing $\approx$1000 gene expression values. As advocated, we reduce dimensionality to $\mathbb{R}^{30}$ using PCA and normalize the data. The source and target distributions, denoted $X$ and $Y$, correspond to samples from day 7 and day 8, respectively.
The cost matrix is of size $6507 \times 3815$ between day 7 and 8, and of size $3815 \times 2982$ between day 8 and 9.
We report in Figure~\ref{fig:wot} the performance of the standard Sinkhorn (dashed) with SK-NR using spectral information. We used $\ell=30$ eigenvectors for all experiments.
\begin{figure}[t]
  \begin{center}
    \includegraphics[width=0.45\textwidth]{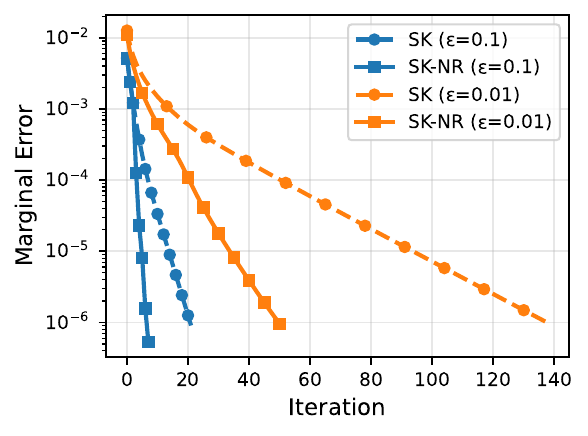} \hfill
    \includegraphics[width=0.45\textwidth]{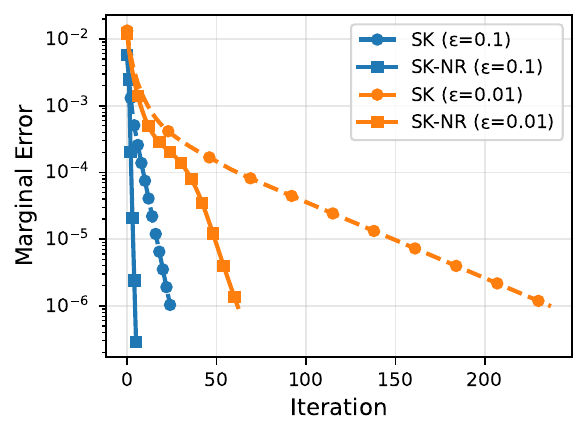}
  \end{center}
  \caption{SK-NR (plain) vs vanilla Sinkhorn (dashed) using decreasing value of $\varepsilon$ on WOT dataset. The first run $\varepsilon = 0.5$ is warm-started by a vanilla SK at $\varepsilon = 1.0$. \textbf{(Left)} Optimal coupling between day 7 and 8. \textbf{(Right)} Between day 8 and 9.}
  \label{fig:wot}
\end{figure}

\section{Conclusion}
\label{section:conclusion}
In this paper, we presented a method for accelerating the ubiquitous SK algorithm by injecting spectral information in the process. This leads to an algorithm which we named SK--NR($\ell$), where $\ell$ is a hyperparameter that chooses the number of eigenvectors used for acceleration.
Acceleration is dramatic for smaller $\varepsilon$, which reflects how poorly the SK algorithm behaves as $\varepsilon \rightarrow 0$.

For future research, we are convinced that proving the stability of the spectral projectors associated to $\mathbb{K}_\varepsilon$, is a worthwhile and challenging mathematical endeavor. This paper illustrates the practical implications of this stability property. Finally, the new research directions opened by this paper make it timely to revisit existing annealed and scheduling strategies when taking $\varepsilon \rightarrow 0$ in the line of~\citep{chizat2024annealedsinkhornoptimaltransport}.

\bibliography{EOT}

\begin{thebibliography}{28}
\providecommand{\natexlab}[1]{#1}
\providecommand{\url}[1]{\texttt{#1}}
\expandafter\ifx\csname urlstyle\endcsname\relax
  \providecommand{\doi}[1]{doi: #1}\else
  \providecommand{\doi}{doi: \begingroup \urlstyle{rm}\Url}\fi

\bibitem[Abid and Gower(2018)]{abid2018stochastic}
B.~K. Abid and R.~Gower.
\newblock Stochastic algorithms for entropy-regularized optimal transport problems.
\newblock In \emph{International conference on artificial intelligence and statistics}, pages 1505--1512, 2018.

\bibitem[Alaya et~al.(2019)Alaya, Berar, Gasso, and Rakotomamonjy]{alaya2019screening}
M.~Z. Alaya, M.~Berar, G.~Gasso, and A.~Rakotomamonjy.
\newblock Screening sinkhorn algorithm for regularized optimal transport.
\newblock \emph{Advances in Neural Information Processing Systems}, 32, 2019.

\bibitem[Altschuler et~al.(2017)Altschuler, Niles-Weed, and Rigollet]{altschuler2017near}
J.~Altschuler, J.~Niles-Weed, and P.~Rigollet.
\newblock Near-linear time approximation algorithms for optimal transport via sinkhorn iteration.
\newblock \emph{Advances in neural information processing systems}, 30, 2017.

\bibitem[Altschuler et~al.(2019)Altschuler, Bach, Rudi, and Niles-Weed]{altschuler2019massively}
J.~Altschuler, F.~Bach, A.~Rudi, and J.~Niles-Weed.
\newblock Massively scalable sinkhorn distances via the nystr{\"o}m method.
\newblock \emph{Advances in neural information processing systems}, 32, 2019.

\bibitem[Chizat(2024)]{chizat2024annealedsinkhornoptimaltransport}
L.~Chizat.
\newblock Annealed sinkhorn for optimal transport: convergence, regularization path and debiasing, 2024.
\newblock URL \url{https://arxiv.org/abs/2408.11620}.

\bibitem[Chizat et~al.(2020)Chizat, Roussillon, L\'{e}ger, Vialard, and Peyr\'{e}]{NEURIPS202017f98ddf}
L.~Chizat, P.~Roussillon, F.~L\'{e}ger, F.-X. Vialard, and G.~Peyr\'{e}.
\newblock Faster wasserstein distance estimation with the sinkhorn divergence.
\newblock In H.~Larochelle, M.~Ranzato, R.~Hadsell, M.~Balcan, and H.~Lin, editors, \emph{Advances in Neural Information Processing Systems}, volume~33, pages 2257--2269. Curran Associates, Inc., 2020.
\newblock URL \url{https://proceedings.neurips.cc/paper_files/paper/2020/file/17f98ddf040204eda0af36a108cbdea4-Paper.pdf}.

\bibitem[Chung(1997)]{chung1997spectral}
F.~R. Chung.
\newblock \emph{Spectral graph theory}, volume~92.
\newblock American Mathematical Soc., 1997.

\bibitem[Cuturi(2013)]{cuturi2013sinkhorn}
M.~Cuturi.
\newblock Sinkhorn distances: Lightspeed computation of optimal transport.
\newblock \emph{Advances in neural information processing systems}, 26, 2013.

\bibitem[Cuturi and Peyr{\'e}(2018)]{cuturi2018semidual}
M.~Cuturi and G.~Peyr{\'e}.
\newblock Semidual regularized optimal transport.
\newblock \emph{SIAM Review}, 60\penalty0 (4):\penalty0 941--965, 2018.

\bibitem[Dvurechensky et~al.(2018)Dvurechensky, Gasnikov, and Kroshnin]{pmlr-v80-dvurechensky18a}
P.~Dvurechensky, A.~Gasnikov, and A.~Kroshnin.
\newblock Computational optimal transport: Complexity by accelerated gradient descent is better than by sinkhorn’s algorithm.
\newblock In \emph{Proceedings of the 35th International Conference on Machine Learning}, volume~80, pages 1367--1376, 10--15 Jul 2018.

\bibitem[Genevay et~al.(2016)Genevay, Cuturi, Peyr{\'e}, and Bach]{genevay2016stochastic}
A.~Genevay, M.~Cuturi, G.~Peyr{\'e}, and F.~Bach.
\newblock Stochastic optimization for large-scale optimal transport.
\newblock \emph{Advances in neural information processing systems}, 29, 2016.

\bibitem[Genevay et~al.(2018)Genevay, Peyr{\'e}, and Cuturi]{genevay2018learning}
A.~Genevay, G.~Peyr{\'e}, and M.~Cuturi.
\newblock Learning generative models with sinkhorn divergences.
\newblock In \emph{International Conference on Artificial Intelligence and Statistics}, pages 1608--1617, 2018.

\bibitem[Holley and Stroock(1988)]{holley1988simulated}
R.~Holley and D.~Stroock.
\newblock Simulated annealing via sobolev inequalities.
\newblock \emph{Communications in Mathematical Physics}, 115\penalty0 (4):\penalty0 553--569, 1988.

\bibitem[Kosowsky and Yuille(1991)]{kosowsky1991solving}
J.~Kosowsky and A.~Yuille.
\newblock Solving the assignment problem with statistical physics.
\newblock In \emph{IJCNN-91-Seattle International Joint Conference on Neural Networks}, volume~1, pages 159--164. IEEE, 1991.

\bibitem[Lavenant et~al.(2024)Lavenant, Zhang, Kim, Schiebinger, et~al.]{lavenant2024toward}
H.~Lavenant, S.~Zhang, Y.-H. Kim, G.~Schiebinger, et~al.
\newblock Toward a mathematical theory of trajectory inference.
\newblock \emph{The Annals of Applied Probability}, 34\penalty0 (1A):\penalty0 428--500, 2024.

\bibitem[Lin et~al.(2022)Lin, Ho, and Jordan]{lin2022efficiency}
T.~Lin, N.~Ho, and M.~I. Jordan.
\newblock On the efficiency of entropic regularized algorithms for optimal transport.
\newblock \emph{Journal of Machine Learning Research}, 23\penalty0 (137):\penalty0 1--42, 2022.
\newblock URL \url{http://jmlr.org/papers/v23/20-277.html}.

\bibitem[Luise et~al.(2018)Luise, Rudi, Pontil, and Ciliberto]{luise2018differential}
G.~Luise, A.~Rudi, M.~Pontil, and C.~Ciliberto.
\newblock Differential properties of sinkhorn approximation for learning with wasserstein distance.
\newblock \emph{Advances in Neural Information Processing Systems}, 31, 2018.

\bibitem[Miclo(1995)]{miclo1995comportement}
L.~Miclo.
\newblock Comportement de spectres d'operateurs de schrodinger a basse temperature.
\newblock \emph{Bulletin des sciences math{\'e}matiques}, 119\penalty0 (6):\penalty0 529--554, 1995.

\bibitem[Pauwels and Vaiter(2023)]{pauwels2023derivatives}
E.~Pauwels and S.~Vaiter.
\newblock The derivatives of sinkhorn--knopp converge.
\newblock \emph{SIAM Journal on Optimization}, 33\penalty0 (3):\penalty0 1494--1517, 2023.

\bibitem[Peyr{\'e} et~al.(2019)Peyr{\'e}, Cuturi, et~al.]{peyre2019computational}
G.~Peyr{\'e}, M.~Cuturi, et~al.
\newblock Computational optimal transport: With applications to data science.
\newblock \emph{Foundations and Trends{\textregistered} in Machine Learning}, 11\penalty0 (5-6):\penalty0 355--607, 2019.

\bibitem[Schiebinger et~al.(2019)Schiebinger, Shu, Tabaka, Cleary, Subramanian, Solomon, Gould, Liu, Lin, Berube, et~al.]{schiebinger2019optimal}
G.~Schiebinger, J.~Shu, M.~Tabaka, B.~Cleary, V.~Subramanian, A.~Solomon, J.~Gould, S.~Liu, S.~Lin, P.~Berube, et~al.
\newblock Optimal-transport analysis of single-cell gene expression identifies developmental trajectories in reprogramming.
\newblock \emph{Cell}, 176\penalty0 (4):\penalty0 928--943, 2019.

\bibitem[Schmitzer(2019)]{schmitzer2019stabilized}
B.~Schmitzer.
\newblock Stabilized sparse scaling algorithms for entropy regularized transport problems.
\newblock \emph{SIAM Journal on Scientific Computing}, 41\penalty0 (3):\penalty0 A1443--A1481, 2019.

\bibitem[Tang et~al.(2024)Tang, Shavlovsky, Rahmanian, Tardini, Thekumparampil, Xiao, and Ying]{tang2024accelerating}
X.~Tang, M.~Shavlovsky, H.~Rahmanian, E.~Tardini, K.~K. Thekumparampil, T.~Xiao, and L.~Ying.
\newblock Accelerating sinkhorn algorithm with sparse newton iterations.
\newblock In \emph{ICLR}, 2024.

\bibitem[Thibault et~al.(2021)Thibault, Chizat, Dossal, and Papadakis]{a14050143}
A.~Thibault, L.~Chizat, C.~Dossal, and N.~Papadakis.
\newblock Overrelaxed sinkhorn–knopp algorithm for regularized optimal transport.
\newblock \emph{Algorithms}, 14\penalty0 (5), 2021.
\newblock ISSN 1999-4893.
\newblock \doi{10.3390/a14050143}.
\newblock URL \url{https://www.mdpi.com/1999-4893/14/5/143}.

\bibitem[Thornton and Cuturi(2023)]{thornton2023rethinking}
J.~Thornton and M.~Cuturi.
\newblock Rethinking initialization of the sinkhorn algorithm.
\newblock In \emph{International Conference on Artificial Intelligence and Statistics}, pages 8682--8698, 2023.

\bibitem[Vialard(2019)]{vialard2019elementary}
F.-X. Vialard.
\newblock An elementary introduction to entropic regularization and proximal methods for numerical optimal transport.
\newblock 2019.

\bibitem[Villani(2021)]{villani2021topics}
C.~Villani.
\newblock \emph{Topics in optimal transportation}, volume~58.
\newblock American Mathematical Soc., 2021.

\bibitem[Villani et~al.(2008)]{villani2008optimal}
C.~Villani et~al.
\newblock \emph{Optimal transport: old and new}, volume 338.
\newblock Springer, 2008.

\end{thebibliography}
\bibliographystyle{abbrvnat}

\appendix

\section{Supplementary Material}

\subsection{Preliminary formulas}
Let us start by recording formulas which will be useful in the proofs. Recall the following. For $(f,g) \in L^2( \Xc ; \alpha) \times L^2( \Yc ; \beta )$, we write $f \oplus g: (x,y) \mapsto f(x)+g(y)$. $H = L^2( \Xc ; \alpha) \oplus L^2( \Yc ; \beta )$
is a Hilbert space endowed with scalar product $\langle f_1 \oplus g_1, f_2 \oplus g_2 \rangle_H = \langle f_1, g_1 \rangle_{L^2( \Xc ; \alpha)} + \langle f_2, g_2 \rangle_{L^2( \Yc ; \beta)}$.

\paragraph{Oscillation norm.}
Define the oscillation semi-norm on both spaces $L^2( \Xc ; \alpha)$ and $L^2( \Yc ; \beta )$ as
$$
    \| \cdot \|_{\mathrm{osc}}: f \mapsto \| f \|_{\mathrm{osc}} = \max f - \min f \ .
$$
Having vanishing semi-norm is equivalent to being constant. It is thus a norm on functions modulo $\R \1$.

In order to metrize $L^2( \Xc ; \alpha) \times L^2( \Yc ; \beta )$ modulo shifts by $(\1, -\1)$, we shall start from any norm $\| \cdot \|$ on functions of two variables and then, on $H$ use
$$
    (f,g) \mapsto \| f \oplus g \| \ .
$$
The default norm is induced from the scalar product on $H$.

\paragraph{For the dual formulation.}
Recall that for $(f,g) \in L^2( \Xc ; \alpha) \times L^2( \Yc ; \beta )$, the objective function from Eq. \eqref{eq:def_Q} is
\begin{align}
\label{eq:def_Q_appendix}
    Q_\varepsilon(f,g)
:= & \langle f, \alpha \rangle
  + \langle g, \beta \rangle
  - \varepsilon \left( -1 + \iint \exp\left( \frac{f \oplus g - \Cc}{\varepsilon} \right) \alpha \otimes \beta \right) .
\end{align}
The following expressions for the first and second order differentials are straightforwardly obtained by Taylor-expansion. The differential along the direction $(f_1, g_1)$ at the point $(f,g)$ is
\begin{align}
\label{eq:dQ}
    dQ_\varepsilon(f,g) \cdot (f_1, g_1)
:= & \langle f_1, \alpha \rangle
  + \langle g_1, \beta \rangle
  - \iint \left( f_1 \oplus g_1 \right) \exp\left( \frac{f \oplus g - \Cc}{\varepsilon} \right) \alpha \otimes \beta .
\end{align}
Upon setting this linear form to zero, we find the first order optimality condition. We recognize the Sinkhorn equations for the pair $(f, g)$ 
\begin{align}
\label{eq:FOC_SK}
    0 & = 1 - \exp\left( \frac{f(\cdot)}{\varepsilon} \right) \int \exp\left( \frac{g(y) - \Cc(\cdot, y)}{\varepsilon} \right) \beta(dy) \ , \\
   0 & = 1 - \exp\left( \frac{g(\cdot)}{\varepsilon} \right) \int \exp\left( \frac{f(x) - \Cc(x, \cdot)}{\varepsilon} \right) \alpha(dx) \ .
\end{align}
The second order differential is the quadratic form
\begin{align}
\label{eq:d2Q}
    d^2 Q_\varepsilon(f,g) \cdot (f_1, g_1) \cdot (f_1, g_1)
:= &
  - \frac{1}{\varepsilon} \iint \left( f_1 \oplus g_1 \right)^2 \exp\left( \frac{f \oplus g - \Cc}{\varepsilon} \right) \alpha \otimes \beta .
\end{align}
Noticing that this operator is non-positive, we deduce that $Q_\varepsilon$ is globally concave. Furthermore, the second order differential is negative definite for $(f_1, g_1)$ considered modulo $\R (\1 , -\1)$, hence strong concavity in the following sense. For every compact set $K \subset H$, there exist $\mu_K$ such that $(f \oplus g, f' \oplus g')  \in K \times K$
\begin{align}
\label{eq:Q_strong_concavity}
    Q_\varepsilon(f',g')
\leq & Q_\varepsilon(f',g')
  + dQ_\varepsilon(f,g) \cdot (f'-f, g'-g)
  - \frac{\mu_K}{2} \| (f'-f) \oplus (g'-g) \|^2 .
\end{align}
\paragraph{For the semi-dual formulation.} Recall that the semi-dual formulation uses an objective function $Q_\varepsilon^{\mathrm{semi}}$ from Eq. \eqref{eq:def_Q_semi}. It is given for $f \in L^2( \Xc ; \alpha)$ by
\begin{align}
\label{eq:def_Q_semi_appendix}
    Q_\varepsilon^{\mathrm{semi}}(f) = Q_\varepsilon( f, f^{\Cc, \varepsilon})
:= & \langle f, \alpha \rangle
  + \langle f^{\Cc, \varepsilon}, \beta \rangle .
\end{align}
In fact, this function is defined on $L^2( \Xc ; \alpha) \mod \R \1$ to account for shifts by constants.
In order to compute differentials, we need to perturb the Sinkhorn transform $f \mapsto f^{\Cc, \varepsilon}$. For small $\eta>0$, and fixed function $f_1$, we have
\begin{align}
  & \left( f + \eta f_1 \right)^{\Cc, \varepsilon}(y) - f^{\Cc, \varepsilon}(y) \\
= & - \eta \int f_1(x) \exp\left( \frac{f(x)+f^{\Cc, \varepsilon}(y)-\Cc(x,y)}{\varepsilon} \right) \alpha(dx) \nonumber\\
  & -\frac{\eta^2}{2 \varepsilon}
  \Bigl[ \int f_1(x)^2 \exp\left( \frac{f(x)+f^{\Cc, \varepsilon}(y)-\Cc(x,y)}{\varepsilon} \right) \alpha(dx) \nonumber \\
  & - \left( \int f_1(x) 
       \exp\left( \frac{f(x)+f^{\Cc,\varepsilon}(y)-\Cc(x,y)}{\varepsilon} 
       \right) \alpha(dx) \right)^2
  \Bigr]
  + \Oc( \eta^3 ) \nonumber \ .
\end{align}
From this, one easily deduces the expressions for the first and second order differentials. 
\begin{align}
\label{eq:dQ_semi}
    dQ_\varepsilon^{\mathrm{semi}}(f,g) \cdot f_1
:= & \ \langle f_1, \alpha \rangle
  - \iint f_1(x) \exp\left( \frac{f(x) + f^{\Cc,\varepsilon}(y) - \Cc(x,y)}{\varepsilon} \right) (\alpha \otimes \beta) (dxdy) \ ,
\end{align}
\begin{align}
\label{eq:d2Q_semi}
    d^2 Q_\varepsilon^{\mathrm{semi}}(f) \cdot f_1 \cdot f_1
:= &
  - \frac{1}{\varepsilon} \Big[ \iint f_1(x)^2 \exp\left( \frac{f(x) + f^{\Cc,\varepsilon}(y) - \Cc(x,y)}{\varepsilon} \right) (\alpha \otimes \beta) (dxdy) \\
   & 
   - \int \beta(dy) \left( \int f_1(x) 
       \exp\left( \frac{f(x)+f^{\Cc,\varepsilon}(y)-\Cc(x,y)}{\varepsilon} 
       \right) \alpha(dx) \right)^2
   \Big] \nonumber \ .
\end{align}

At this level, notice that 
$$
\int f_1(x)^2 \exp\left( \frac{f(x)+f^{\Cc, \varepsilon}(y)-\Cc(x,y)}{\varepsilon} \right) \alpha(dx) 
- 
\left( \int f_1(x) 
       \exp\left( \frac{f(x)+f^{\Cc,\varepsilon}(y)-\Cc(x,y)}{\varepsilon}
       \right) \alpha(dx) \right)^2
$$
is a variance which we write $\mathrm{Var}_{\Cc,\varepsilon, f, y}( f_1 )$. The subscript indicates that we use a measure which depends on all these parameters. This variance is controlled from above and below by the oscillation norm.
Strong concavity ensues, in the following sense.
For every compact set $K \subset L^2(\Xc ; \alpha) \mod \R \1$, there exist $\mu_K$ such that $(f , f')  \in K \times K$
\begin{align}
\label{eq:Q_semi_strong_concavity}
    Q_\varepsilon^{\mathrm{semi}}(f')
\leq & Q_\varepsilon(f')
  + dQ_\varepsilon(f) \cdot (f'-f)
  - \frac{\mu_K}{2} \| f'-f \|_{\textrm{osc}}^2 .
\end{align}

\subsection{Proof of Theorem \ref{thm:hessian_structure}}

\paragraph{Step 1: Expression for the Hessian.}
We consider the second order differential from Eq. \eqref{eq:d2Q} and perform simplifications thanks to the Sinkhorn Equations \eqref{eq:FOC_SK}
\begin{align*}
  & - \varepsilon \ d^2 Q_\varepsilon(f^*,g^*) \cdot (f_1, g_1) \cdot (f_1, g_1) \\
= &
  \iint \left( f_1 \oplus g_1 \right)^2(x,y) \exp\left( \frac{(f^* \oplus g^*)(x,y) - \Cc(x,y)}{\varepsilon} \right) (\alpha \otimes \beta)(dxdy) \\
= & \int \alpha(dx) f_1(x)^2 \int \beta(dy) \exp\left( \frac{f^*(x) + g^*(y) - \Cc(x,y)}{\varepsilon} \right)\\
  & + \int \beta(dy) g_1(y)^2 \int \alpha(dx) \exp\left( \frac{f^*(x) + g^*(y) - \Cc(x,y)}{\varepsilon} \right)\\
  & + 2 \iint f_1(x) g_1(y) \exp\left( \frac{f^*(x) + g^*(y) - \Cc(x,y)}{\varepsilon} \right) (\alpha \otimes \beta)(dx dy)\\
= & \int \alpha(dx) f_1(x)^2 + \int \beta(dy) g_1(y)^2\\
  & + 2 \iint f_1(x) g_1(y) \exp\left( \frac{f^*(x) + g^*(y) - \Cc(x,y)}{\varepsilon} \right) (\alpha \otimes \beta)(dx dy) \ .
\end{align*}
Now in order to identify the Hessian operator, the previous expression needs to be written as a scalar product
$$
   \langle -\varepsilon \nabla^2 Q_\varepsilon(f^*, g^*) (f_1, g_1 ), (f_1, g_1) \rangle_H \ .
$$
By identification, we see that
$$
-\varepsilon \nabla^2 Q_\varepsilon(f^*, g^*) (f_1, g_1 )
= (f_1, g_1) + \left( \Op( \pi_\varepsilon^* ) ( g_1 ), \Op( \pi_\varepsilon^* )^T ( f_1 ) \right) \ .
$$
This is indeed the announced form $\Id_H + \K_\varepsilon$.

\paragraph{Step 2: Spectral properties.} 

\paragraph{Step 2.1: Spectrum is real and symmetric around zero.}
In an orthogonal basis adapted to the decomposition $H = L^2( \Xc ; \alpha) \oplus L^2( \Yc ; \beta )$, the operator $\K_\varepsilon$ takes the form
\begin{align}
    \label{eq:K_form}
    \K_\varepsilon
    & = \begin{pmatrix}
    0 & R \\
    R^T &  0
    \end{pmatrix} \ ,
\end{align}
which is a symmetric real operator. Its spectrum is therefore real. Such a matrix is known as the Hermitization of $R$, and its spectrum is provided by $\pm \lambda$ for $\lambda$ a singular value of 
$R$. Indeed 
$$
\begin{pmatrix}
    0 & R \\
    R^T &  0
    \end{pmatrix}
\begin{pmatrix}
    u \\ v
\end{pmatrix}
= \lambda \begin{pmatrix}
    u \\ v
\end{pmatrix}
$$
is equivalent to the following statements. On the one hand, it is equivalent to the pair of equations
\begin{align*}
   R v = \lambda u \ ,
   \quad
   R^T u = \lambda v \ ,
\end{align*}
which implies that
\begin{align*}
R^T R v = \lambda^2 v \ ,
   \quad
   R R^T u = \lambda^2 u \ .
\end{align*}
This proves the relationship to singular values. On the other hand, it is also equivalent to
$$
\begin{pmatrix}
    0 & R \\
    R^T &  0
    \end{pmatrix}
\begin{pmatrix}
    -u \\ v
\end{pmatrix}
= -\lambda \begin{pmatrix}
    -u \\ v
\end{pmatrix} \ ,
$$ 
which proves that $\lambda$ an eigenvalue if and only if $-\lambda$ is an eigenvalue, with a simple correspondence between eigenvectors.

\paragraph{Step 2.2: Spectrum is in $[-1, 1]$.} Going back to Eq. \eqref{eq:K_form}, if the chosen basis is proportional to indicators of points (Dirac basis), the matrix representation has non-negative coefficients -- $R_{i,j}$ being the value of an explicit positive integral. Therefore, the Perron-Frobenius Theorem applies. The matrix is clearly irreducible and $2$-periodic. Furthermore the Perron-Frobenius positive eigenvector is the constant function since $\K_\varepsilon( \1_\Xc  \oplus \1_\Yc ) = \1_\Xc  \oplus \1_\Yc$, by virtue of the Sinkhorn Equations \eqref{eq:FOC_SK}.

By uniqueness of the Perron-Frobenius eigenvector, which is the unique positive eigenvector whose corresponding eigenvalue has maximal modulus, all the spectrum must be included in $[-1, 1]$. Since we also have a spectral gap, positive eigenvalues are $1 = \rho_0 > \rho_1 \geq \dots$. Negative eigenvalues are simply the opposite, by symmetry. This concludes our proof.

\subsection{Proof of Lemma \ref{lemma:equivalent}}
\begin{proof}
We start by the SK equations \eqref{eq:SK_iterations} and the first order condition \eqref{eq:foc_f}
\begin{align*}
      & g_{k}^{\Cc, \varepsilon} - f^* \\
    = & - \varepsilon \log \int \beta(dy) \exp\left( \frac{g_{k}(y) - \Cc(\cdot, y)}{\varepsilon} \right) - f^*\\
    = & - \varepsilon \log \int \beta(dy) \exp\left( \frac{f^* (\cdot) + g^* (y) - \Cc(\cdot, y)}{\varepsilon} \right) \exp\left( \frac{g_{k}(y) - g^* (y)}{\varepsilon} \right)\\
    = & - \varepsilon \log\left[ 1 +  \int \beta(dy) \exp\left( \frac{f^* (\cdot) + g^* (y) - \Cc(\cdot, y)}{\varepsilon} \right) \left[ \exp\left( \frac{g_{k}(y) - g^* (y)}{\varepsilon} \right) - 1 \right] \right] \ .
\end{align*}
Upon performing  Taylor expansion of $\exp$ at $0$, and of $\log$ at $1$, then we have
\begin{align*}
      & g_{k}^{\Cc, \varepsilon} - f^* \\
    = & - \varepsilon \log\left[ 1 +  \int \beta(dy) \exp\left( \frac{f^* (\cdot) + g^* (y) - \Cc(\cdot, y)}{\varepsilon} \right) \left[ \frac{g_{k}(y) - g^* (y)}{\varepsilon} + \Oc\left( \frac{1}{\varepsilon^2}\|g_{k} - g^*\|_\infty^2 \right)  \right] \right] \\
    = & - \varepsilon \log\left[ 1 +  \frac{1}{\varepsilon} \int \beta(dy) \exp\left( \frac{f^* (\cdot) + g^* (y) - \Cc(\cdot, y)}{\varepsilon} \right) \left[ g_{k}(y) - g^* (y) \right] + \Oc\left( \frac{1}{\varepsilon^2}\|g_{k} - g^*\|_\infty^2 \right)   \right] \\
    = & - \int \beta(dy) \exp\left( \frac{f^* (\cdot) + g^* (y) - \Cc(\cdot, y)}{\varepsilon} \right) \left[ g_{k}(y) - g^* (y) \right] + \Oc\left( \frac{1}{\varepsilon}\|g_{k} - g^*\|_\infty^2 \right) \ .
\end{align*}
We recognize the $L^2(\Xc ; \alpha)$ part of $-\K_\varepsilon (f_k-f^*, g_k-g^*)$. The computation of $f_{k}^{\Cc, \varepsilon}-g^*$ is done in entirely the same fashion.
\end{proof}

\subsection{On the relation between Hessians of dual and semi-dual formulation}

We mentioned in Section \ref{section:algorithm} that the dual and semi-dual Hessians $\nabla^2 Q_\varepsilon(f^*, g^*)$ and $\nabla^2 Q_\varepsilon^{\mathrm{semi}}(f^*)$ basically contain the same spectral information. Let us now detail the precise relationship between them. We basically preferred invoking the semi-dual formulation in the algorithm for the sake of computational efficiency. 

At the optimal point $f = f^*$, thanks to the Sinkhorn equations \eqref{eq:FOC_SK}, Eq. \eqref{eq:d2Q_semi} becomes
\begin{align*}
   & d^2 Q_\varepsilon^{\mathrm{semi}}(f^*) \cdot f_1 \cdot f_1 \\
 = &
  - \frac{1}{\varepsilon} \Big[ \iint f_1(x)^2 \exp\left( \frac{f^*(x) + g^*(y) - \Cc(x,y)}{\varepsilon} \right) (\alpha \otimes \beta) (dxdy) \\
   & 
   - \int \alpha(dx) f_1(x) \int \beta(dy) \exp\left( \frac{f^*(x)+g^*(y)-\Cc(x,y)}{\varepsilon} \right)
     \int \alpha(dx') 
       f_1(x') 
       \exp\left( \frac{f^*(x')+g^*(y)-\Cc(x,y)}{\varepsilon}
       \right)
   \Big] \\
 = &
  - \frac{1}{\varepsilon} \Big[ \int f_1(x)^2 \alpha(dx) \\
   & 
   - \int \alpha(dx) f_1(x) \int \beta(dy) \exp\left( \frac{f^*(x)+g^*(y)-\Cc(x,y)}{\varepsilon} \right)
     \mathrm{Op}(\pi_\varepsilon^*)^T(f_1)(y)
   \Big] \\
 = &
  - \frac{1}{\varepsilon} 
  \Big[ \int f_1(x)^2 \alpha(dx)
   - \int \alpha(dx) f_1(x)
     \left[ \mathrm{Op}(\pi_\varepsilon^*)
     \circ \mathrm{Op}(\pi_\varepsilon^*)^T \right] (f_1)(x)
   \Big] \\
 = & \langle f_1, \nabla^2 Q_\varepsilon^{\mathrm{semi}}(f^*) f_1 \rangle_{L^2(\Xc;\alpha)} \ .
\end{align*}
Upon identification, we find that
\begin{align}
    \label{eq:semidual_Hessian}
    \nabla^2 Q_\varepsilon^{\mathrm{semi}}(f^*)
    = -\frac{1}{\varepsilon}
      \left( \Id - \mathrm{Op}(\pi_\varepsilon^*)
     \circ \mathrm{Op}(\pi_\varepsilon^*)^T \right) \ .
\end{align}
In the block notations of Eq. \eqref{eq:K_form}, this is just saying that 
$    \nabla^2 Q_\varepsilon^{\mathrm{semi}}(f^*)
    = -\frac{1}{\varepsilon}
      \left( \Id - R R^T \right) \ .$
Because of the Hermitization procedure used in the proof of Theorem \ref{thm:hessian_structure}, indeed the spectral decomposition of $\K_\varepsilon$ is essentially equivalent to that of $R R^T$.

\subsection{Proof of Theorem \ref{thm:convergence}}
\begin{proof}[Proof of convergence]
Let $(f_k, g_k)$ be the pair of Kantorovich potentials at iteration $k$. Our goal is to prove the convergence towards the unique optimizers
\begin{align}
    \label{eq:potential_cv}
    \lim_{k \rightarrow \infty} (f_k, g_k) = (f^*, g^*) \ .
\end{align}

\paragraph{Step 1: The value function increases.}
Let us start by showing that for $k \geq 1$, 
$$
Q_\varepsilon^{\mathrm{semi}}(f_k) \geq Q_\varepsilon^{\mathrm{semi}}(f_{k-1}) \ .
$$
Clearly the NR step only increases the value function $Q_\varepsilon^{\mathrm{semi}}$. So, we only need to deal with the SK iteration. Recall that the SK algorithm can be reinterpreted as a block gradient ascent for the function $Q_\varepsilon$. As such, the update for $f_k$ in line 4 is the best value for given $g_k$. and the update for $g_k$ in line 3 is the best value for given $f_{k-1}$. As such
\begin{align}
\label{eq:inequalities}
Q_\varepsilon^{\mathrm{semi}}(f_k)
= & \ Q_\varepsilon( f_k, f_k^{\Cc, \varepsilon} ) \nonumber \\
\geq & \ Q_\varepsilon( f_k, g_k ) = Q_\varepsilon( g_k^{\Cc, \varepsilon}, g_k ) \nonumber \\
\geq & \ Q_\varepsilon( f_{k-1}, g_k )
= Q_\varepsilon^{\mathrm{semi}}(f_{k-1}) \ .
\end{align}
In particular $f_k$ remains in a compact set $K$ of $L^2(\Xc ; \alpha) \mod \R \1$.

\paragraph{Step 2: Consequences of strong concavity for the oscillation norm.}
Recall that Kantorovich potentials $(f_k, g_k)$ are only defined modulo $\R (\1, -\1)$. Likewise, $f_k$ is only defined modulo a constant. As such the semi-norm
$$
    \| \cdot \|_{\mathrm{osc}}: f \mapsto \| f \|_{\mathrm{osc}} = \max f - \min f
$$
yields a norm on the quotient space. By definition of strong-concavity
Because of strong-convavity as in Eq. \eqref{eq:Q_semi_strong_concavity}, we have for all $(f, f') \in K \times K$
$$
    Q_\varepsilon^{\mathrm{semi}}(f)
    \leq 
    Q_\varepsilon^{\mathrm{semi}}(f')
    +
    dQ_\varepsilon^{\mathrm{semi}}(f')\cdot (f-f')
    -
    \frac{\mu_K}{2} \left\| f - f' \right\|_{\mathrm{osc}}^2 \ .
$$
Setting $f'=f^*$ for the optimum point, one obtains the classical inequality that allows to control distance to the optimum using the optimality gap
\begin{align}
    \label{eq:value_from_gap_control}
    \frac{\mu_K}{2} \left\| f - f^* \right\|_{\mathrm{osc}}^2
    \leq & \ 
    Q_\varepsilon^{\mathrm{semi}}(f^*)
    -
    Q_\varepsilon^{\mathrm{semi}}(f) \ .
\end{align}
Now, we combine Step 1 with Eq. \eqref{eq:value_from_gap_control} to deduce the following. On the one hand, $f_k$ remains in a compact set $K$. On the other hand, if 
$$
\lim_{k \rightarrow \infty} Q_\varepsilon^{\mathrm{semi}}(f_k)
= Q_\varepsilon^{\mathrm{semi}}(f^*) \ ,
$$
then we are done.

\paragraph{Step 3: Conclusion.}
Let assume that 
\begin{align}
\label{eq:assumption}
\lim_{k \rightarrow \infty} Q_\varepsilon^{\mathrm{semi}}(f_k)
= & \ Q_\varepsilon^\infty < Q_\varepsilon^{\mathrm{semi}}(f^*) \ ,
\end{align}
and find a contradiction. Given the explanation of the previous paragraph, that would conclude the proof.

To do so, we shall prove that one SK iteration guarantees sufficient increase, thanks to strong concavity and independently of the NR step. An SK-NR iteration starts from $f_{k-1}$, then computes the SK iteration which we write as
$$
    f_k = \left( f_{k-1}^{\Cc, \varepsilon} \right)^{\Cc, \varepsilon}
        = \left( g_k^{\Cc, \varepsilon} \right)^{\Cc, \varepsilon} \ , 
$$
then performs the low dimensional NR step, which may or may not be successful. There is no loss of generality in ignoring the NR step, hence our notations. Since the SK iteration is a global contraction for the Hilbert distance \cite[Theorem 4.2]{peyre2019computational}, and because the Hilbert projective distance is nothing but the oscillation semi-norm, after taking logarithms, there is a $0 < \lambda = \lambda(\Cc, \varepsilon) < 1$ such that
$$
\left\| f_k - f^* \right\|_{\mathrm{osc}}
\leq \lambda^2 \left\| f_{k-1} - f^* \right\|_{\mathrm{osc}} \ .
$$
By the reverse triangular inequality, we have
\begin{align*}
       \left\| f_{k} - f_{k-1} \right\|_{\mathrm{osc}}
\geq & \ \left\| f_{k-1} - f^* \right\|_{\mathrm{osc}} - \left\| f_k - f^* \right\|_{\mathrm{osc}} \\
\geq & \ (1-\lambda^2) \left\| f_{k-1} - f^* \right\|_{\mathrm{osc}} \ ,
\end{align*}
and the latter quantity remains bounded from below thanks to our standing assumption \eqref{eq:assumption}.

Now let us examine the increase in the inequalities \eqref{eq:inequalities}. By the strong concavity \eqref{eq:Q_strong_concavity} applied for the points $( f_{k-1}, g_k )$ and $( f_{k}, g_k )$, we have
\begin{align*}
      \frac{\mu_K}{2} \| (f_k- f_{k-1}) \oplus 0 \|^2
\leq & Q_\varepsilon( f_{k-1}, g_k )
     - Q_\varepsilon( f_k, g_k )
     + dQ_\varepsilon(f_{k-1}, g_k) \cdot \left( f_k - f_{k-1}, 0 \right) \ .
\end{align*}
The difference $Q_\varepsilon( f_{k-1}, g_k ) - Q_\varepsilon( f_k, g_k )$ goes to zero as $k \rightarrow \infty$ because of the inequalities \eqref{eq:inequalities} and our standing assumption \eqref{eq:assumption}. Also, because we can use any norm, there exist a constant $c>0$ such that
\begin{align*}
      \liminf_{k \rightarrow \infty} \| (f_k- f_{k-1}) \|_{\mathrm{osc}}^2
\leq & c \liminf_{k \rightarrow \infty} dQ_\varepsilon(f_{k-1}, g_k) \cdot \left( f_k - f_{k-1}, 0 \right) \ .
\end{align*}

Because $\| f_k- f_{k-1}\|_{\mathrm{osc}}$ is bounded from below, $dQ_\varepsilon(f_{k-1}, g_k) \cdot \left( f_k - f_{k-1}, 0 \right)$ is eventually also positive bounded away from zero. Therefore there is a gradient step starting from $(f_{k-1}, g_k)$, only in the direction of $L^2(\Xc;\alpha)$, which yields a sufficient increase.

This yields a contradiction. By the block ascent interpretation of the Sinkhorn algorithm, the step $f_k \leftarrow g_k^{\Cc, \varepsilon}$ is supposed to be better than any gradient step in the direction of $L^2(\Xc;\alpha)$, yet does not yield sufficient increase under assumption \eqref{eq:assumption}. 
\end{proof}

\begin{proof}[Proof of the error estimate]
Let us analyze of step of the algorithm starting from a potential $f_k$. For shorter notation, write $F_k = \left( f_k^{\Cc, \varepsilon} \right)^{\Cc, \varepsilon}$. Moreover, if the NR step is accepted, we have $f_{k+1} = F_k + \Delta F_k$, where $\Delta F_k = -\left( \nabla_{V_\ell}^2 Q_\varepsilon^{\mathrm{semi}}(F_k) \right)^{-1} \nabla_{V_\ell} Q_\varepsilon^{\mathrm{semi}}(F_k)$ is the NR increment.

Now that the convergence $(f_k, g_k) \stackrel{k \rightarrow \infty}{\longrightarrow} (f^*, g^*)$ is established, Lemma \ref{lemma:equivalent} applies. Upon writing $\K_\varepsilon$ in block form as Eq. \eqref{eq:K_form}, and applying the Lemma twice, we have
\begin{align}
\label{eq:first_error}
    F_k - f^* = 
  & \ \left( f_k^{\Cc, \varepsilon} \right)^{\Cc, \varepsilon} - f^* \\
= & \ -R \left( f_k^{\Cc, \varepsilon} - g^* \right) + \Oc\left( \|f_k - f\|^2 \right) \nonumber \\
= & \ R R^T \left( f_k - f^* \right) + \Oc\left( \|f_k - f\|^2 \right) \nonumber \ .
\end{align}

Since for $k$ large enough, we are in the basin of attraction of Newton-Raphson, the NR step is accepted.  As such, upon substituting, we have
\begin{align*}
  & f_{k+1}-f^*\\
= & F_k + \Delta F_k - f^*\\
= & R R^T \left( f_k - f^* \right) 
  - \left( \nabla_{V_\ell}^2 Q_\varepsilon^{\mathrm{semi}}(F_k) \right)^{-1} \nabla_{V_\ell} Q_\varepsilon^{\mathrm{semi}}(F_k)
  + \Oc\left( \|f_k - f\|^2 \right) \\
= & R R^T \left( f_k - f^* \right) 
  - \left( \nabla_{V_\ell}^2 Q_\varepsilon^{\mathrm{semi}}(F_k) \right)^{-1} \left( \nabla_{V_\ell} Q_\varepsilon^{\mathrm{semi}}(F_k)
  - \nabla_{V_\ell} Q_\varepsilon^{\mathrm{semi}}(f^*) \right)
  + \Oc\left( \|f_k - f\|^2 \right) \ .
\end{align*}
At this stage, let us introduce $(\Pi_{V_\ell}, \Pi_{V_\ell}^\perp )$ which are the (orthogonal) projectors on $V_\ell$ and its orthogonal. By Taylor expansion, and using the fact that $F_k-f^* = \Oc(f_k-f^*)$, this yields
\begin{align*}
  & f_{k+1}-f^*\\
= & R R^T \left( f_k - f^* \right) 
  - \left( \nabla_{V_\ell}^2 Q_\varepsilon^{\mathrm{semi}}(F_k) \right)^{-1} 
  \Pi_{V_\ell} \left( \nabla Q_\varepsilon^{\mathrm{semi}}(F_k)
  - \nabla Q_\varepsilon^{\mathrm{semi}}(f^*) \right)
  + \Oc\left( \|f_k - f\|^2 \right) \\
= & R R^T \left( f_k - f^* \right) 
  - \left( \nabla_{V_\ell}^2 Q_\varepsilon^{\mathrm{semi}}(f^*) \right)^{-1} 
  \Pi_{V_\ell} \nabla^2 Q_\varepsilon^{\mathrm{semi}}(f^*) \left(  F_k - f^* \right)
  + \Oc\left( \|f_k - f\|^2 \right) \ .
\end{align*}
By reinjecting Eq. \eqref{eq:first_error}, we find
\begin{align*}
  & f_{k+1}-f^*\\
= & R R^T \left( f_k - f^* \right) 
  - \left( \nabla_{V_\ell}^2 Q_\varepsilon^{\mathrm{semi}}(f^*) \right)^{-1} 
  \Pi_{V_\ell} \nabla^2 Q_\varepsilon^{\mathrm{semi}}(f^*) R R^T \left(  f_k - f^* \right)
  + \Oc\left( \|f_k - f\|^2 \right) \\
= & \left[ \Id 
  - \left( \nabla_{V_\ell}^2 Q_\varepsilon^{\mathrm{semi}}(f^*) \right)^{-1} 
  \Pi_{V_\ell} \nabla^2 Q_\varepsilon^{\mathrm{semi}}(f^*) 
  \right]
  R R^T \left(  f_k - f^* \right)
  + \Oc\left( \|f_k - f\|^2 \right) \ .
\end{align*}

Now, invoke the fact that
$$
\nabla_{V_\ell}^2 Q_\varepsilon^{\mathrm{semi}}(f^*)
=
\Pi_{V_\ell} \nabla^2 Q_\varepsilon^{\mathrm{semi}}(f^*) \Pi_{V_\ell} \ ,
$$
so that
\begin{align*}
  & f_{k+1}-f^*\\
= & \left[ \Id 
  - \left( \nabla_{V_\ell}^2 Q_\varepsilon^{\mathrm{semi}}(f^*) \right)^{-1} 
  \Pi_{V_\ell} \nabla^2 Q_\varepsilon^{\mathrm{semi}}(f^*) \left( \Pi_{V_\ell} + \Pi_{V_\ell^\perp} \right)
  \right]
  R R^T \left(  f_k - f^* \right)
  + \Oc\left( \|f_k - f\|^2 \right) \\
= & \left[ \Id 
  - \Pi_{V_\ell}
  \left( \nabla_{V_\ell}^2 Q_\varepsilon^{\mathrm{semi}}(f^*) \right)^{-1}
  \Pi_{V_\ell} \nabla^2 Q_\varepsilon^{\mathrm{semi}}(f^*) \Pi_{V_\ell^\perp} 
  \right]
  R R^T \left(  f_k - f^* \right)
  + \Oc\left( \|f_k - f\|^2 \right) \\
 = & \left[ \Pi_{V_\ell^\perp}
  - \left( \nabla_{V_\ell}^2 Q_\varepsilon^{\mathrm{semi}}(f^*) \right)^{-1} 
  \Pi_{V_\ell} \nabla^2 Q_\varepsilon^{\mathrm{semi}}(f^*) \Pi_{V_\ell^\perp}
  \right]
  R R^T \left(  f_k - f^* \right)
  + \Oc\left( \|f_k - f\|^2 \right)\ .
\end{align*}

Recall that we have assumed that $V_\ell = V_\ell(\varepsilon)$ is exactly the eigenspace spanned by the first eigenvalues of $\nabla^2 Q_\varepsilon^{\mathrm{semi}}(f^*)$. In such a case, projectors commute with the Hessian so that the following great simplification occurs. We have
$$
\Pi_{V_\ell(\varepsilon)} \nabla^2 Q_\varepsilon^{\mathrm{semi}}(f^*) \Pi_{V_\ell(\varepsilon)^\perp} = 0 \ ,
$$
and thus
\begin{align*}
  f_{k+1}-f^*
= & \ \Pi_{V_\ell(\varepsilon)}^\perp R R^T \left( f_k - f^* \right) 
  + \Oc\left( \|f_k - f\|^2 \right) \ .
\end{align*}
The operator norm of $\Pi_{V_\ell}^\perp R R^T$ is indeed controlled by $\rho_{\ell+1}^2$, and we are done with the proof. Also notice that the bound is sharp, at first order.
\end{proof}

On a concluding note, it is possible to write an error estimate which takes into account that in practice, $V_\ell$ is not exactly $V_\ell(\varepsilon)$. Quantifying the difference would add a term that depends on $\| \Pi_{V_\ell} - \Pi_{V_\ell(\varepsilon)} \|_{\mathrm{op}} = \| \Pi_{V_\ell^\perp} - \Pi_{V_\ell(\varepsilon)^\perp} \|_{\mathrm{op}}$. We chose to not do that as it only complicates the final statement and the proof. For example the projector $\Pi_{V_\ell}$ no longer commutes $\nabla^2 Q_\varepsilon^{\mathrm{semi}}(f^*)$. Furthermore, the control is not very explicit and would certainly benefit from a better understanding of how $V_\ell(\varepsilon)$ and $V_\ell(\varepsilon')$ differ for $\varepsilon'>\varepsilon$. 

Let us sketch what happens for the sake of completeness. If $V_\ell$ is not assumed to be $V_\ell(\varepsilon)$, i.e., spanned by eigenvectors of the Hessian, then
\begin{align*}
   & f_{k+1}-f^* \\
=  & \ \left[ \Pi_{V_\ell^\perp} - \Pi_{V_\ell(\varepsilon)^\perp}
  - \left( \nabla_{V_\ell}^2 Q_\varepsilon^{\mathrm{semi}}(f^*) \right)^{-1} 
  \Pi_{V_\ell} \nabla^2 Q_\varepsilon^{\mathrm{semi}}(f^*) \Pi_{V_\ell^\perp}
  \right]
  R R^T \left(  f_k - f^* \right) \\
  & \ \quad + \Pi_{V_\ell(\varepsilon)}^\perp R R^T \left( f_k - f^* \right)
  + \Oc\left( \|f_k - f\|^2 \right) \ .
\end{align*}
Upon invoking the triangle inequality and bounding using operator norms, we have
\begin{align*}
   & \| f_{k+1}-f^* \| \\
\leq  & \ \left\| \left( \nabla_{V_\ell}^2 Q_\varepsilon^{\mathrm{semi}}(f^*) \right)^{-1} 
  \Pi_{V_\ell} \nabla^2 Q_\varepsilon^{\mathrm{semi}}(f^*) \Pi_{V_\ell^\perp}
  \right\|_{\mathrm{op}}
  \|  f_k - f^* \| \\
  & \ \quad + \left( \| \Pi_{V_\ell} - \Pi_{V_\ell(\varepsilon)} \|_{\mathrm{op}} + \rho_{\ell+1}^2 \right)
  \|  f_k - f^* \|
  + \Oc\left( \|f_k - f\|^2 \right) \\
\leq  & \ \left\| \left( \nabla_{V_\ell}^2 Q_\varepsilon^{\mathrm{semi}}(f^*) \right)^{-1}
  \right\|_{\mathrm{op}}
  \left\|
  \Pi_{V_\ell} \nabla^2 Q_\varepsilon^{\mathrm{semi}}(f^*) \Pi_{V_\ell^\perp}
  -
  \Pi_{V_\ell(\varepsilon)} \nabla^2 Q_\varepsilon^{\mathrm{semi}}(f^*) \Pi_{V_\ell(\varepsilon)^\perp}
  \right\|_{\mathrm{op}}
  \|  f_k - f^* \| \\
  & \ \quad + \left( \| \Pi_{V_\ell} - \Pi_{V_\ell(\varepsilon)} \|_{\mathrm{op}} + \rho_{\ell+1}^2 \right)
  \|  f_k - f^* \|
  + \Oc\left( \|f_k - f\|^2 \right) \\
\leq  & \ \left\| \left( \nabla_{V_\ell}^2 Q_\varepsilon^{\mathrm{semi}}(f^*) \right)^{-1}
  \right\|_{\mathrm{op}}
  \left(
  2 \| \Pi_{V_\ell} - \Pi_{V_\ell(\varepsilon)} \|_{\mathrm{op}}
  \| \nabla^2 Q_\varepsilon^{\mathrm{semi}}(f^*) \|_{\mathrm{op}}
  \right)
  \|  f_k - f^* \| \\
  & \ \quad + \left( \| \Pi_{V_\ell} - \Pi_{V_\ell(\varepsilon)} \|_{\mathrm{op}} + \rho_{\ell+1}^2 \right)
  \|  f_k - f^* \|
  + \Oc\left( \|f_k - f\|^2 \right) \\
= & \ \left[ \rho_{\ell+1}^2 
  + c
  \| \Pi_{V_\ell} - \Pi_{V_\ell(\varepsilon)} \|_{\mathrm{op}}
  \right]
  \|  f_k - f^* \|
  + \Oc\left( \|f_k - f\|^2 \right) \ ,
\end{align*}
where 
$$
c := 1 + 2 \left\| \left( \nabla_{V_\ell}^2 Q_\varepsilon^{\mathrm{semi}}(f^*) \right)^{-1}
  \right\|_{\mathrm{op}}
  \| \nabla^2 Q_\varepsilon^{\mathrm{semi}}(f^*) \|_{\mathrm{op}}
  \ .
$$
Notice that this bound specializes to our error estimate, when $\| \Pi_{V_\ell} - \Pi_{V_\ell(\varepsilon)} \|_{\mathrm{op}} = 0$. It is however unclear whether the constant $c$ is sharp.

\end{document}